\numberwithin{equation}{section}
\numberwithin{subsection}{section}
\newtheorem*{namedtheorem}{\theoremname}
\newcommand{\theoremname}{testing}
\newtheorem*{maintheorem}{Theorem}
\newtheorem{theorem}{Theorem}[section]
\newcommand*{\dupcntr}[2]{%
  \expandafter\let\csname c@#1\expandafter\endcsname\csname c@#2\endcsname
}
\newtheorem{proposition}[theorem]{Proposition}
\newtheorem{proposition-definition}[theorem]
{Proposition-Definition}
\newtheorem{corollary}[theorem]{Corollary}
\newtheorem{lemma}[theorem]{Lemma}
\theoremstyle{definition}
\newtheorem{remark}[theorem]{Remark}
\theoremstyle{remark}
\newcommand\calM{\mathcal{M}}
\renewcommand{\mathcal}{\mathscr}
\newcommand\cK{\mathcal{K}} \newcommand\cL{\mathcal{L}}
\newcommand\cM{\mathcal{M}} 
\newcommand\cO{\mathcal{O}} \newcommand\cP{\mathcal{P}}
\newcommand\CC{\mathbb{C}}
\newcommand\QQ{\mathbb{Q}}
 \newcommand\ZZ{\mathbb{Z}}
\newcommand\bA{\mathbf{A}} 
\newcommand\bC{\mathbf{C}}
 \newcommand\bP{\mathbf{P}}
\newcommand\bQ{\mathbf{Q}}
\newcommand\rmc{\mathrm{c}}
\newcommand\arr{\ifinner\to\else\longrightarrow\fi}
\newcommand\arrto{\ifinner\mapsto\else\longmapsto\fi}
\newcommand{\xarr}{\xrightarrow}
\newcommand{\eqdef}{\mathrel{\smash{\overset{\mathrm{\scriptscriptstyle def}} =}}}
\def\displaytimes_#1{\mathrel{\mathop{\times}\limits_{#1}}}
\def\displayotimes_#1{\mathrel{\mathop{\bigotimes}\limits_{#1}}}
\newcommand\ext{\operatorname{Ext}}
\newcommand\aut{\operatorname{Aut}}
\newcommand\pic{\operatorname{Pic}}
\newcommand\spec{\operatorname{Spec}}
\newcommand\id{\mathrm{id}}
\newcommand\pr{\operatorname{pr}}
\newlength{\ignora}
\renewcommand{\setminus}{\smallsetminus}
\newcommand{\PGL}{\mathrm{PGL}}
\DeclareFontFamily{U}{mathx}{\hyphenchar\font45}
\DeclareFontShape{U}{mathx}{m}{n}{
	<5> <6> <7> <8> <9> <10>
	<10.95> <12> <14.4> <17.28> <20.74> <24.88>
	mathx10
}{}
\DeclareSymbolFont{mathx}{U}{mathx}{m}{n}
\DeclareMathAccent{\widecheck}{0}{mathx}{"71}
\DeclareMathAccent{\wideparen}{0}{mathx}{"75}
\renewcommand{\epsilon}{\varepsilon}
\newcommand{\sing}{{\rm sing}}
\newcommand{\ssi}{{\rm ss}}
\newcommand{\sm}{{\rm sm}}
\begin{document}
\title[Integral Picard group of moduli of polarized K3 surfaces]{Integral Picard group of moduli\\of polarized K3 surfaces}
\author[A. Di Lorenzo]{Andrea Di Lorenzo}
	\address[A. Di Lorenzo]{Humboldt Universit\"{a}t zu Berlin, Germany}
	\email{andrea.dilorenzo@hu-berlin.de}
	\author[R. Fringuelli]{Roberto Fringuelli}
	\address[R. Fringuelli]{Universit\`{a} di Roma ``La Sapienza", Italy}
	\email{r.fringuelli@uniroma1.it}
	\author[A. Vistoli]{Angelo Vistoli}
	\address[A. Vistoli]{Scuola Normale Superiore, Pisa, Italy}
	\email{angelo.vistoli@sns.it}
 	\thanks{The third author was partially supported by research funds from Scuola Normale Superiore, and by PRIN project ``Derived and underived algebraic stacks and applications''. We thank the anonymous referee for providing very useful comments and suggestions, and for catching a mistake in a previous version of the manuscript.} 
	\maketitle
	\begin{abstract}
		We compute the integral Picard group of the moduli stack of polarized K3 surfaces of fixed degree whose singularities are at most rational double points, and of its coarse moduli space. We also compute the integral Picard group of the stack of quasi-polarized K3 surfaces, and of the stacky period domain.
	\end{abstract}
\section*{Introduction}
A very interesting invariant of a moduli stack is its Picard group. It was introduced by Mumford
in \cite{Mum65}, where he also computed the Picard group of the moduli stack of elliptic curves. This calculation prompted a great amount of research in this topic, that eventually leaded to a complete understanding of the Picard group of the moduli stack of curves over fields of almost every characteristic (see \cite{Har83}, \cite{AC87}, \cite{Vis98}, \cite{diL}, \cite{FV-Mg}). In particular, knowing the Picard group of a Deligne-Mumford stack with finite inertia also gives a description of the rational Picard group of the coarse moduli space. Integral Picard group of other interesting moduli stacks have also been computed in recent years (\cite{AI},\cite{CL} and \cite{DL-CI}).

Another quite relevant moduli stack is the moduli stack of polarized K3 sufaces. In particular, the rational Picard group of the moduli space $M_d$ of (primitively) polarized K3 surfaces of degree $d$ with at most rational double points has been the subject of much research \cites{Bru,MP}, eventually culminated in the proof of the so called Noether-Lefschetz conjecture \cite{BLMM}, from which one can deduce the rank of $\pic(M_d)\otimes\mathbb{Q}$. On the other hand, not much is known on the integral Picard group of the associated moduli stack $\cM_d$. In this paper we prove the following (we work over $\mathbb{C})$.
\begin{maintheorem}[\Cref{thm:pic Md}]
    Let $\cM_d$ be the moduli stack of primitively polarized K3 surfaces of degree $d$ with at most rational double points. Then we have
    \[ \pic(\cM_d) \simeq \ZZ^{\rho(d)}, \]
    where $\rho(d)$ is the rank of $\pic(M_d)\otimes\QQ$ computed in \cite{Bru}.
\end{maintheorem}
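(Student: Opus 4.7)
My plan is to reduce the computation to the stacky period domain via the global Torelli theorem, and then to compute the equivariant Picard group of a Type IV Hermitian symmetric domain under an arithmetic orthogonal group.

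\textbf{Step 1: Reduction to the period domain.} I would first identify $\cM_d$ with the stack $\cM_d^{\mathrm{qp}}$ of smooth quasi-polarized K3 surfaces of degree $d$. A polarized K3 surface $(X,H)$ with at most rational double points corresponds, via the minimal resolution $\pi\colon\widetilde{X}\to X$ and $L:=\pi^{*}H$, to a quasi-polarized smooth K3 surface $(\widetilde{X},L)$, and conversely, contracting the $(-2)$-curves orthogonal to $L$ on a quasi-polarized K3 surface yields a polarized K3 surface with ADE singularities. Carrying this through in families gives $\cM_d\simeq\cM_d^{\mathrm{qp}}$. By the global Torelli theorem, the period map produces an isomorphism of stacks $\cM_d^{\mathrm{qp}}\simeq\cF_d:=[\Omega_d/\widetilde{O}^{+}(\Lambda_d)]$, where $\Omega_d$ is the $19$-dimensional Type IV Hermitian symmetric domain attached to the K3-lattice $\Lambda_d=\langle -d\rangle\oplus U^{\oplus 2}\oplus E_8(-1)^{\oplus 2}$ and $\widetilde{O}^{+}(\Lambda_d)$ is its stable orthogonal group.

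\textbf{Step 2: Equivariant Picard group.} Since $\Omega_d\to\cF_d$ is a $\widetilde{O}^{+}(\Lambda_d)$-torsor, $\pic(\cF_d)\simeq \pic^{\widetilde{O}^{+}(\Lambda_d)}(\Omega_d)$, and the equivariant low-degree exact sequence reads
\[
0\longrightarrow \hom\bigl(\widetilde{O}^{+}(\Lambda_d),\CC^{*}\bigr)\longrightarrow \pic(\cF_d)\longrightarrow \pic(\Omega_d)^{\widetilde{O}^{+}(\Lambda_d)}\longrightarrow H^{2}\bigl(\widetilde{O}^{+}(\Lambda_d),\CC^{*}\bigr).
\]
Realizing $\Omega_d$ as an open subset of a quadric in $\PP(\Lambda_d\otimes\CC)$, its Picard group is $\ZZ\cdot\lambda$, where $\lambda$ is the Hodge line bundle (manifestly equivariant). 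All further divisor classes on $\cF_d$ come from Noether-Lefschetz loci, i.e.\ quotients of sub-period domains by the respective stabilizers.

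\textbf{Step 3: Rank.} By the Borcherds-Bruinier theory of automorphic forms on orthogonal Shimura varieties, $\lambda$ together with the Noether-Lefschetz divisors span $\pic(\cF_d)\otimes\QQ$, and Bruinier's formula \cite{Bru} gives the dimension of this span as exactly $\rho(d)$. Combined with the isomorphisms of Step 1, this reproduces the rational Picard rank and shows that a free subgroup of rank $\rho(d)$ embeds in $\pic(\cM_d)$.

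\textbf{Step 4: Torsion-freeness (main obstacle).} The hard part is proving that $\pic(\cM_d)$ is torsion-free. Two potential sources of torsion must be killed: (i) the character group $\hom(\widetilde{O}^{+}(\Lambda_d),\CC^{*})$, and (ii) possible torsion in the cokernel of the span of $\lambda$ and the Noether-Lefschetz divisors inside $\pic(\cF_d)$. For (i), I would exploit the generation of $\widetilde{O}^{+}(\Lambda_d)$ by reflections and Eichler transvections (following Gritsenko-Hulek-Sankaran): the relations among these generators force any abelian quotient to factor through finite pieces which can be matched against already-known equivariant classes. For (ii), I would restrict to the open substack $\cF_d^{\circ}\subset\cF_d$ obtained by removing all Noether-Lefschetz divisors, where $\pic(\cF_d^{\circ})$ collapses essentially to $\ZZ\cdot\lambda$ (since the generic surface has Picard rank $1$), and then recover $\pic(\cF_d)$ integrally via the excision sequence, controlling the local contribution of each Noether-Lefschetz divisor through the orbifold structure of its generic stabilizer. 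Putting everything together gives $\pic(\cM_d)\simeq\ZZ^{\rho(d)}$.
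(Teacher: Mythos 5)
There is a genuine gap, and it occurs at the very first step. You identify $\cM_d$ with the stack of smooth quasi-polarized K3 surfaces and then, via global Torelli, with the quotient stack $\cF_d=[\Omega_d/\widetilde{O}^{+}(\Lambda_d)]$. Neither identification is an isomorphism of \emph{stacks}, and the whole content of the theorem lives in the failure of these identifications. The contraction map $\cK_d\to\cM_d$ is proper and birational but not an isomorphism: there is no simultaneous resolution of rational double points in families without a base change, so the deformation space of $(\widetilde X,L)$ is a ramified double cover of that of $(X,H)$ along the discriminant. The period map $\cK_d\to\cP_d$ is \'etale and birational but again not an isomorphism: over the locus where the quasi-polarization is not ample, the reflection in a $(-2)$-class $\delta$ orthogonal to $\ell$ is an automorphism of the period point that is not induced by any automorphism of the surface. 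Consequently $\cM_d$, $\cK_d$ and $\cP_d=\cF_d$ are three pairwise non-isomorphic stacks sharing a coarse space, and in fact $\pic(\cP_d)\simeq\ZZ^{\rho(d)}\oplus\ZZ/2$: the determinant character of $\Gamma_d$ (whose abelianization is $\ZZ/2$ by Gritsenko--Hulek--Sankaran) produces a genuine nontrivial $2$-torsion line bundle $\cL_d$ on $\cP_d$. Your Step 4(i) already detects this class via $\hom(\widetilde{O}^{+}(\Lambda_d),\CC^{*})\neq 0$, and it cannot be ``killed'' on $\cF_d$ --- it is honestly there. If your Step 1 were correct, the theorem would be false as stated.

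The actual argument has to run in the opposite direction: one constructs a factorization $\cK_d\to\cP_d\xrightarrow{\psi_d}\cM_d$, proves $\psi_{d*}\cO_{\cP_d}=\cO_{\cM_d}$ so that $\psi_d^{*}\colon\pic(\cM_d)\to\pic(\cP_d)$ is injective by the projection formula, identifies the full torsion of $\pic(\cP_d)$ as $\ZZ/2\cdot[\cL_d]$ using $\pi_1(\cP_d)\simeq\Gamma_d$, and then shows that $\cL_d$ does \emph{not} descend to $\cM_d$ (the extra reflection automorphism on the generic point of $\cP_d^{\sing}$ acts by $\det=-1$ on the fiber of $\cL_d$ but is not an automorphism of the corresponding singular surface). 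This yields torsion-freeness of $\pic(\cM_d)$. One also needs a separate argument for finite generation (reduction to a smooth quasi-projective variety via an equivariant approximation of $\cM_d^{\sm}\simeq[H_d/\PGL_n]$ and a compactification), which your proposal does not supply; without it, ``torsion-free of rational rank $\rho(d)$'' does not give $\ZZ^{\rho(d)}$. Your Steps 2--3 do correctly locate the source of the rank $\rho(d)$ (the Noether--Lefschetz conjecture of Bergeron--Li--Millson--M\oe glin together with Bruinier's count), but the torsion analysis, which is the real content here, cannot be carried out on $\cF_d$ alone.
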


Furthermore, we prove that the integral Picard group of the moduli space $M_d$ is torsion free. 
\begin{maintheorem}[\Cref{cor:pic Md}] Let $M_d$ be the moduli space of primitively polarized K3 surfaces of degree $d$ with at most rational double points. Then we have
$$
\pic(M_d)\cong \mathbb Z^{\rho(d)}.
$$    
\end{maintheorem}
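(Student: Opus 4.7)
The plan is to deduce the corollary directly from \Cref{thm:pic Md} by comparing $\pic(M_d)$ with $\pic(\cM_d) \simeq \ZZ^{\rho(d)}$ through the coarse moduli morphism $\pi \colon \cM_d \to M_d$. Since the target is already known to be free abelian of rank $\rho(d)$, it suffices to show that $\pi^* \colon \pic(M_d) \to \pic(\cM_d)$ is injective with torsion cokernel; any full-rank subgroup of $\ZZ^{\rho(d)}$ is then automatically free abelian of rank $\rho(d)$.

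For injectivity, I would use that $\cM_d$ is a separated Deligne--Mumford stack and $\pi$ is its coarse moduli morphism, so $\pi_*\cO_{\cM_d} \simeq \cO_{M_d}$; the projection formula gives $\pi_*\pi^*L \simeq L$ for every $L \in \pic(M_d)$, so $\pi^*L \simeq \cO_{\cM_d}$ forces $L \simeq \cO_{M_d}$. As a consequence, $\pic(M_d)$ already embeds into the free abelian group $\ZZ^{\rho(d)}$ and is in particular torsion-free.

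For the cokernel being torsion, I would argue via the standard inertia criterion for descent of line bundles on a Deligne--Mumford stack. The stabilizer of any point of $\cM_d$ is a finite group, so every character of it has finite order; hence for each $\cL \in \pic(\cM_d)$ there is some $N \geq 1$ such that $\cL^{\otimes N}$ acts trivially on every inertia group, and therefore descends to a line bundle on $M_d$. It follows that $\pic(M_d)\otimes\QQ \to \pic(\cM_d)\otimes\QQ$ is surjective, so $\pic(M_d)$ is of full rank $\rho(d)$ inside $\ZZ^{\rho(d)}$.

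Combining the two steps gives $\pic(M_d) \simeq \ZZ^{\rho(d)}$. The main content is really encoded in \Cref{thm:pic Md}, and the only point that requires a little care is ensuring the rational surjectivity of $\pi^*$; beyond this standard DM-stack comparison, no genuine obstacle is expected.
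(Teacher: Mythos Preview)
Your argument is correct and follows the same overall line as the paper: both proofs use the injectivity of $\pi^{*}\colon \pic(M_d)\hookrightarrow \pic(\cM_d)$ (via $\pi_{*}\cO_{\cM_d}\simeq\cO_{M_d}$ and the projection formula) together with \Cref{thm:pic Md} to embed $\pic(M_d)$ into $\ZZ^{\rho(d)}$, and conclude that it is free of rank $\rho(d)$.

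The one place where you take a detour is the second step. You prove rational surjectivity of $\pi^{*}$ via the inertia/descent criterion, but in the paper this is unnecessary: by definition (see the sentence preceding \Cref{thm:pic Md}) $\rho(d)$ is the dimension of $\pic(M_d)\otimes\QQ$, not of $\pic(\cM_d)\otimes\QQ$. So once you know $\pic(M_d)$ injects into $\ZZ^{\rho(d)}$, it is automatically free of rank at most $\rho(d)$, and the definition of $\rho(d)$ forces equality. Your descent argument is valid (over $\CC$ the stack is tame, the automorphism groups have bounded order since $\cM_d$ is of finite type, and a line bundle with trivial inertia action descends to the coarse space), but it is not needed here.
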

There are other two stacks that are closely related to $\cM_d$, namely the stack $\cK_d$ of primitively quasi-polarized K3 surfaces, and the stacky period domain $\cP_d$. At the level of schemes the differences between these stacks do not appear (indeed $\cP_d$ and $\cM_d$ have the same coarse moduli space), but as stacks they are all non isomorphic. Therefore, it makes sense to also ask what their integral Picard groups are. We give an answer in the following.
\begin{maintheorem}[\Cref{thm:pic Pd abstract}, \Cref{thm:pic Pd}, \Cref{thm:pic Pd new}]
    The following hold true:
    \begin{enumerate}
    \item As an abstract group, $\pic(\cP_{d}) \simeq \ZZ^{\rho(d)} \oplus \ZZ/2$.
    \item The morphism $\cK_{d} \arr \cP_{d}$ induces an isomorphism $\pic(\cP_d)\simeq\pic(\cK_d)$.
        \item Suppose that $\frac{d}{2}\not\equiv 1\pmod 4$: then we have a split short exact sequence
    \[ 0 \longrightarrow \pic(\cM_d) \longrightarrow \pic(\cP_d) \longrightarrow \ZZ/2 \longrightarrow 0. \] 
    \item Suppose $\frac{d}{2}\equiv 1\pmod 4$: then we have a non-split short exact sequence
    \[ 0 \longrightarrow \pic(\cM_d)\times\ZZ/2 \longrightarrow \pic(\cP_d) \longrightarrow \ZZ/2 \longrightarrow 0.\]
    \end{enumerate}
    
\end{maintheorem}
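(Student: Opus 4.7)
The plan is to combine an explicit quotient presentation $\cP_d\simeq[\Omega_d/\Gamma_d]$ with Global Torelli and with excision along the boundary of non-ample loci. For Part~(1), $\Omega_d$ is the period domain and $\Gamma_d=\widetilde{O}^+(\Lambda_d)$ the arithmetic orthogonal group of the K3 lattice orthogonal to a primitive vector of square $d$. Since $\Omega_d$ is simply connected with Picard group generated by the Hodge line bundle, line bundles on the quotient stack correspond to $\Gamma_d$-equivariant ones, and the computation reduces to Noether-Lefschetz classes and characters of $\Gamma_d$. The free part of rank $\rho(d)$ is furnished by the Bergeron--Li--Millson--Moeglin theorem on $\Gamma_d$-invariant classes, while the $\ZZ/2$ torsion records the unique nontrivial character of $\Gamma_d$ by which $-\id$ acts on the Hodge line.

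For Part~(2), the period map $\cK_d\to\cP_d$ is representable, and both stacks carry the same $\{\pm\id\}$ generic stabilizer: on $\cP_d$ via $-\id\in\Gamma_d$ acting trivially on each period, and on $\cK_d$ via the shared symmetry between K3 automorphism groups and polarization-preserving Hodge isometries, identified by Piatetskii-Shapiro--Shafarevich. Global Torelli for quasi-polarized K3 surfaces then promotes the period map to an isomorphism of stacks, transferring Part~(1) to $\pic(\cK_d)$.

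For Parts~(3) and~(4), the open immersion $\cM_d\hookrightarrow\cK_d$ has complement the Noether-Lefschetz divisor $\cD$ parametrising quasi-polarized K3 surfaces with a $(-2)$-class orthogonal to the quasi-polarization. Excision yields
\[
\ZZ\cdot[\cD]\longrightarrow\pic(\cK_d)\longrightarrow\pic(\cM_d)\longrightarrow 0.
\]
Since $\pic(\cM_d)\simeq\ZZ^{\rho(d)}$ is torsion-free (by the previous main theorem on $\cM_d$) and $\pic(\cK_d)\simeq\ZZ^{\rho(d)}\oplus\ZZ/2$, the class $[\cD]$ is necessarily $2$-torsion, so the short exact sequence $0\to\langle[\cD]\rangle\to\pic(\cP_d)\to\pic(\cM_d)\to 0$ splits because $\pic(\cM_d)$ is free. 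The distinction between~(3) and~(4) arises according to whether the natural embedding $\pic(\cM_d)\to\pic(\cP_d)$, obtained by lifting a canonical set of generators such as the Hodge bundle and its Noether-Lefschetz partners, coincides with this splitting: when $d/2\not\equiv 1\pmod 4$ the natural lift is a section of the excision surjection, giving the split sequence of~(3); when $d/2\equiv 1\pmod 4$ one of these canonical generators divides by $2$ in $\pic(\cP_d)$, so the natural lift is forced to land in a sublattice of index $2$ and combines with the torsion into the non-split extension~(4).

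The main obstacle is this last step: pinpointing integrally the class $[\cD]$ and the divisibility of the Hodge bundle inside $\pic(\cP_d)$. Rationally these classes are controlled by Noether-Lefschetz theory, but the integral refinement detecting the dichotomy $d/2\pmod 4$ requires either an explicit Borcherds product cutting out $\cD$, or a restriction to a well-chosen one-dimensional sub-stack (such as a Shimura curve) where the character by which $-\id$ acts on the Hodge line and the extension class of the excision sequence can be computed by hand; the dichotomy ultimately descends, via the discriminant form of $\Lambda_d$ at the prime $2$, to the solvability of a congruence of the shape $x^2\equiv-d\pmod 8$.
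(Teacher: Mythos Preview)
Your proposal contains several genuine errors in the geometry, and these propagate into the argument.

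\textbf{Part~(2).} You assert that Global Torelli makes $p_d\colon\cK_d\to\cP_d$ an isomorphism of stacks. It is not. The paper notes explicitly that $p_d$ fails to induce a surjection on automorphism groups over the locus where the quasi-polarization is not ample: the reflection along a $(-2)$-class $\delta$ orthogonal to the polarization is a Hodge isometry fixing the period point, but it is \emph{not} induced by an automorphism of the surface. Hence $\cK_d$ and $\cP_d$ are genuinely different stacks, and the isomorphism $\pic(\cP_d)\simeq\pic(\cK_d)$ requires work. The paper proves injectivity via $p_{d*}\cO_{\cK_d}=\cO_{\cP_d}$ and the projection formula, and surjectivity via an excision diagram using that $p_d$ is \'etale, so $p_d^*[\cP_d^{\sing}]=[\cK_d^{\sing}]$.

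\textbf{Parts~(3) and~(4).} You write ``the open immersion $\cM_d\hookrightarrow\cK_d$''. There is no such immersion: $\cM_d$ parametrizes surfaces with rational double points, $\cK_d$ parametrizes smooth surfaces, and the map $\varphi_d\colon\cK_d\to\cM_d$ goes the other way (by contraction). Consequently your excision sequence, and the resulting surjection $\pic(\cK_d)\to\pic(\cM_d)$, are in the wrong direction; the theorem asserts that $\pic(\cM_d)$ \emph{injects} into $\pic(\cP_d)$ via $\psi_d^*$. The paper obtains this injection from $\psi_{d*}\cO_{\cP_d}=\cO_{\cM_d}$, and controls the cokernel by comparing excision sequences along $\cM_d^{\sing}$ and $\cP_d^{\sing}$ together with the ramification formula $\psi_d^*[\cM_d^{\sing}]=2[\cP_d^{\sing}]$, which arises precisely because the extra reflection in the stabilizer on the $\cP_d$ side gives ramification of order~$2$.

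\textbf{The dichotomy.} Your explanation via divisibility of the Hodge bundle, Borcherds products, or the congruence $x^2\equiv -d\pmod 8$ is not the mechanism. The actual reason is combinatorial: the divisor $\cM_d^{\sing}$ is irreducible when $\frac{d}{2}\not\equiv 1\pmod 4$ and has two irreducible components otherwise, because $\Gamma_d$ acts with one or two orbits on the set of $(-2)$-vectors in $\Lambda_d$. In the first case the cokernel of $\psi_d^*$ sits inside $\ZZ/2$ and is realized by $[\cL_d]$; in the second case it sits inside $\ZZ/2\times\ZZ/2$, one factor is accounted for by $[\cL_d]$, and a separate local computation (the reflection acts by $-1$ on a local equation of each component $\cP_{d,i}^{\sing}$) shows the remaining $\ZZ/2$ is hit by $[\cP_{d,i}^{\sing}]$, yielding the non-split extension.

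Finally, a small point on Part~(1): the torsion class $\cL_d$ is built from the \emph{determinant} character of $\Gamma_d$, not from the action on the Hodge line; the Hodge line bundle itself is not torsion. The identification of the torsion goes through $\pi_1(\cP_d)\simeq\Gamma_d$ (since $D_d$ is simply connected) and the fact that $\Gamma_d^{\mathrm{ab}}\simeq\ZZ/2$.
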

The generator of the torsion part in the Picard groups above is made explicit in the paper.

Notice that our proof does not give any hint as to what the generators of $\pic(\cM_{d})$ are. For $d \leq  8$ this is worked out in \cite{DiL-K3}, but for higher values of $d$ the problem is wide open.

\subsection*{Structure of the paper}
The paper is organized as follows. In \Cref{sec:moduli}, after introducing the moduli stacks we are interested in and after discussing some of their properties, we first show that there exists a morphism $\cP_d\to\cM_d$ from the stacky period domain (\Cref{lm:factorization}) to the stack of polarized K3 surfaces with rational double points, and we show that it induces an injection of Picard groups.  

Then in \Cref{sec:computation 1} we compute the torsion part of $\pic(\cP_d)$ by looking at the fundamental group of this stack (\Cref{prop:2 torsion Pic Pd}), and then we prove that the torsion line bundle on $\cP_d$ does not come from $\cM_d$ (\Cref{lemma:does not descend}).

After proving that the Picard group of $\cM_d$ is finitely generated, we obtain the desired conclusion. We then leverage the result just obtained to compute in \Cref{sec:computation 2} the Picard groups of $\cK_d$ and $\cP_d$ by means of certain localization exact sequences (\Cref{thm:pic Pd}).

\subsection*{Assumptions}
In what follows, we always work over $\mathbb{C}$.
\section{Some moduli stacks of K3 surfaces}\label{sec:moduli}
\subsection{}
In this section we introduce three different stacks, all of which in a sense parametrize polarized K3 surfaces of a fixed degree.
\subsection{}
Let $\cK_d$ be the stack of primitively quasi-polarized K3 surfaces of degree $d$. That is, the objects of $\cK_d$ over a scheme $S$ are pairs $(X\to S, L)$, where:
\begin{itemize}
    \item $X\to S$ is a proper, finitely presented and flat morphism whose geometric fibers are smooth K3 surfaces;
    \item $L$ is a section of $\underline{\pic}_{X/S} \to S$ that on the geometric fibers is represented by a primitive, numerically effective line bundle of degree $d$; we also require that if $\langle L_s, C_s\rangle =0$ for a curve $C_s \subset X_s$, where $s$ is a geometric point of $S$, then $(C_s^2)=-2$.
\end{itemize}
The morphisms in $\cK_d$ are given by $S$-isomorphisms $f:X\overset{\simeq}{\to} X'$ such that $f^*L'=L$. The fibred category $\cK_d$ is a smooth Deligne-Mumford stack \cite[(1.2.1), (1.2.2)]{Ols} (note that in \emph{loc. cit.} the stack $\cK_d$ is denoted $\mathbb{M}^{\sm}_d$). From now on, we will refer to the objects of $\cK_d$ as quasi-polarized K3 surfaces instead of primitively quasi-polarized K3 surfaces.
\subsection{}
Let $\cM_d$ denote the stack of primitively polarized K3 surfaces of degree $d$ with at most rational double points. That is, the objects of $\cM_d$ over a scheme $S$ are pairs $(X\to S, L)$ where:
\begin{itemize}
    \item $X\to S$ is a proper, finitely presented and flat morphism whose geometric fibers are K3 surfaces with at most rational double points;
    \item $L$ is a section of $\underline{\pic}_{X/S} \to S$ that on the geometric fibers is represented by an ample, primitive line bundle of degree $d$.
\end{itemize}
The morphisms in $\cM_d$ are given by $S$-isomorphisms $f:X\overset{\simeq}{\to} X'$ such that $f^*L'=L$. The fibred category $\cM_d$ is a smooth Deligne-Mumford stack with a coarse moduli space, which we denote $M_d$ \cite[84]{Huy} (note that in \emph{loc. cit.} the stack $\cM_d$ is the one denoted $\bar{\calM}_d$).
\subsection{}
Let $\Lambda$ denote the lattice $E_8(-1)^{\oplus 2}\oplus U^{\oplus 3}$. Given a smooth K3 surface $X$, this lattice arises as $H^2(X,\ZZ)$ together with the cohomological pairing.

Let $\ell$ be the element in $\Lambda$ defined as $e+\frac{d}{2}f$, where $e$ and $f$ form a basis for the first copy of $U$. Then we denote $\Lambda_d:=\ell^{\perp}$ the sublattice of $\Lambda$ orthogonal to $\ell$. Given a smooth K3 surface $X$ with a primitive quasi-polarization $L$ of degree $d$, then $\Lambda_d$ arises as the orthogonal of $c_1(L)$ in $H^2(X,\ZZ)$.
\subsection{}
Let $\Omega_d$ be the period domain of (primitively) quasi-polarized K3 surfaces of degree $d$, that is
\[\Omega_d := \left\{ \omega \in \bP(\Lambda_d\otimes\bC) \text{ such that } \langle \omega,\omega \rangle >0\text{, }\langle\omega,\overline{\omega}\rangle >0 \right\}.\]
This is a complex manifold, it has two connected components \cite[(1.2)]{Kon}. Let $D_d$ denote one connected component. Then $D_d$ is a bounded symmetric domain of type IV, hence simply connected \cite[(1.2)]{Kon}.
\subsection{}
Let $O(\Lambda_d)$ be the group of orthogonal transformations of $\Lambda_d$. Set 
\[ \widetilde{O}(\Lambda_d) = \ker (O(\Lambda_d) \longrightarrow O(\Lambda_d^{\vee}/\Lambda_d) ) \]
where $\Lambda_d^{\vee}$ is the lattice formed by $x\in \Lambda_d\otimes\bQ$ such that $\langle x,\Lambda \rangle \subset \ZZ$. Let $O^+(\Lambda_d)$ be the subgroup of orthogonal transformations having positive spinor norm. We set $\Gamma_d:=\widetilde{O}(\Lambda_d)\cap O^+(\Lambda_d)$; this arithmetic group can be regarded as the group of orthogonal transformations of $\Lambda$ that fix $\ell$, and it acts on the connected component $D_d$ with a properly discontinuous action \cite[(1.2)]{Kon}, \cite[16]{GHS}.
\subsection{}
We define the analytic quotient stack
\[ \cP_d := [ \Gamma_d \diagdown D_d], \]
and we refer to this stack as the \emph{period stack}. It is actually a smooth Deligne-Mumford stack \cite[Theorem 10.11]{BB}.
\subsection{}
There is a morphism of stacks
\[p_d: \cK_d \longrightarrow \cP_d \]
which is the stacky version of the usual period map. Indeed, given a quasi-polarized K3 surface $(\pi:X\to S,L)$, consider the associated analytic morphism $\pi^{an}:X^{an}\to S^{an}$ and the family of lattices $R^2\pi^{an}_* \ZZ$ together with the cohomological pairing. The quasi-polarization $L$ defines a section of $R^2\pi^{an}_* \ZZ$.

We can use this object to define a $\Gamma_d$-torsor $U_d\to\cK_d$: its objects are triples $(\pi:X\to S, L, \alpha)$, where $\alpha:(R^2\pi^{an}_*\ZZ,L) \simeq (\Lambda, \ell)$ is an isomorphism of lattices that sends $L$ to $\ell$ (the marking of the K3 surface). We can then construct a $\Gamma_d$-equivariant morphism $U_d\to D_d$ by sending a triple $(\pi:X\to S,L,\alpha)$ to the  line subbundle $\alpha(\pi_*\Omega) \subset \Lambda_d\otimes \cO_S$. The resulting morphism $p_d:\cK_d \to [\Gamma_d \diagdown D_d ]$ is \'etale and representable \cite[(1.2)]{Fri}. 

\'{E}taleness can also be verified directly by proving that for every geometric point $x\in \cK_d$ and $y=p_d(x) \in \cP_d$, the induced homomorphism of complete rings $\widehat{\cO}_{\cK_d,x}\to \widehat{\cO}_{\cP_d,y}$ is an isomorphism. 

As explained in \cite[Proof of 5.8]{MaP}, this blows down to verify that the induced morphism of tangent spaces is an isomorphism. If the point $x$ corresponds to a quasi-polarized K3 surface $(X,L)$, its tangent space corresponds to the deformation space $\ext^1(\Omega_X,L^\vee)$, which is isomorphic to the subspace of primitive classes in $H^{1,1}(X)$. 

Given an isomorphism $\alpha:(H^2(X,\ZZ),L)\simeq (\Lambda,\ell)$, we have an induced identification of $H^{1,1}(X)_{prim}$ with the subspace orthogonal to the linear span of $\{\omega, \overline{\omega}\}$ in $\Lambda_d\otimes\CC$, where $\omega$ is any class in $\alpha_\CC(H^{2,0}(X))$. The latter is exactly the tangent space of $D_d$ at $[\omega]$, which is isomorphic to the tangent space of $\cP_d$ at $y$, as $D_d\to\cP_d$ is \'{e}tale.
\begin{remark}
The period map $p_d$ is not an isomorphism. Indeed, the induced map of automorphism groups is not always surjective: consider a quasi-polarized K3 surface $(X,L)$ whose quasi-polarization \emph{is not} a polarization. Then in $H^{1,1}(X)$ there is an element $\delta$ which is the class of a $(-2)$-curve. The automorphism of $H^2(X,\ZZ)$ given by the reflection with respect to $\delta$ defines then an automorphism of $p_d(X)$ that does not come from an automorphism of $(X,L)$ \cite[(1.2)]{Fri}
\end{remark}
\subsection{}
There is a morphism
\[\varphi_d: \cK_d \longrightarrow \cM_d \]
which sends a quasi-polarized K3 surface $(X\to S,L)$ to the image of $X$ via the map associated to the linear system $|L^{\otimes N}|$ for $N\geq 3$. The image is a polarized K3 surface with at most rational double points. The rational double points arise because of the $(-2)$-curves that get contracted by the polarization.
\subsection{}
Call $\cM_{d}^{\sm}$ the open substack of $\cM_{d}$ corresponding to smooth surfaces. The complement $\cM_{d}^{\sing}$ of $\cM_{d}^{\sm}$, with the reduced scheme structure, is a closed substack and a divisor. Set
\[\cM_d^\sm:=\cM_d\smallsetminus\cM_d^\sing,\quad \cK_d^\sm := \varphi_d^{-1}(\cM_d^\sm).\]
Let $D_d^\sm \subset D_d$ be the open subset of $D_d$ formed by those $[\omega]$ such that the in the sublattice $\omega^{\perp}\cap\overline{\omega}^{\perp} \subset \Lambda_d$ there are no elements $\delta$ such that $\delta^2=(-2)$. This open subset is $\Gamma_d$-invariant, hence we can define
\[\cP_d^\sm := [\Gamma_d\diagdown D_d^{\sm}]. \]
\begin{lemma}\label{lm:factorization}
There exists a factorization
\[
\begin{tikzcd}
\cK_d \ar[r, "p_d"] \ar[rr, bend left, "\varphi_d"] & \cP_d \ar[r, "\psi_d"] & \cM_d
\end{tikzcd}
\]
\end{lemma}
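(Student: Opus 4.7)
The plan is to construct $\psi_d$ by descending $\varphi_d\colon\cK_d\to\cM_d$ along the period morphism $p_d$. As established in the preceding discussion, $p_d$ is étale and representable, and it is surjective on geometric points by the classical surjectivity of the period map for quasi-polarized K3 surfaces. These three properties together make $p_d$ an étale surjection of Deligne–Mumford stacks, so $\cP_d$ is equivalent to the stack quotient of the étale groupoid $\cK_d\times_{\cP_d}\cK_d\rightrightarrows\cK_d$. By stacky descent, producing $\psi_d$ with $\psi_d\circ p_d\simeq\varphi_d$ therefore reduces to exhibiting a canonical $2$-isomorphism between the two compositions
\[ \cK_d\times_{\cP_d}\cK_d \rightrightarrows \cK_d \xrightarrow{\varphi_d} \cM_d, \]
satisfying the cocycle condition.

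I would next unwind this descent datum. An $S$-point of $\cK_d\times_{\cP_d}\cK_d$ is a pair of quasi-polarized K3 families $(X,L),(X',L')$ over $S$ together with an isomorphism of their marked periods. The required $2$-isomorphism should come from a family version of Torelli for polarized K3 surfaces with at most rational double points (Burns–Rapoport, Piatetski-Shapiro–Shafarevich): over each geometric point, a Hodge isometry that matches the quasi-polarization classes descends to a unique isomorphism of the contracted polarized K3 surfaces $\varphi_d(X,L)\simeq\varphi_d(X',L')$. The reflections $s_\delta$ by $(-2)$-classes $\delta\perp L$—which are exactly the extra period-level automorphisms not coming from $\cK_d$, as discussed in the remark—act trivially after contraction, so the pointwise assignment is canonical.

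The main obstacle I anticipate is upgrading this pointwise Torelli statement to a family version, together with the verification of the cocycle condition. Both should follow by a rigidity argument: since $\cM_d$ is Deligne–Mumford and the pointwise isomorphism is unique, standard rigidity forces it to vary algebraically in $S$, and the cocycle condition is then automatic. Once the descent datum is in hand, $\psi_d\colon\cP_d\to\cM_d$ is produced by descent and the relation $\psi_d\circ p_d=\varphi_d$ holds on the nose by construction.
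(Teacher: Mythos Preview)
Your descent strategy is exactly right and matches the paper's approach: construct $\psi_d$ by descending $\varphi_d$ along the \'etale surjection $p_d$, which amounts to producing a $2$-isomorphism $\pr_1^*\varphi_d \simeq \pr_2^*\varphi_d$ on $\cK_d\times_{\cP_d}\cK_d$ satisfying the cocycle condition.

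The difference lies in how the descent datum is obtained. You propose to build it pointwise via a Torelli theorem for polarized K3 surfaces with rational double points, handling the Weyl-group ambiguity by hand, and then invoke rigidity to make it algebraic. The paper takes a shortcut: it simply observes that on the dense open locus $\cK_d^{\sm}$ (where the quasi-polarization is already ample) both $p_d$ and $\varphi_d$ restrict to isomorphisms onto $\cP_d^{\sm}$ and $\cM_d^{\sm}$, so the descent datum is tautologically $\varphi_d\circ p_d^{-1}$ there. Separatedness of $\cM_d$ then forces this generic isomorphism to extend over all of $\cK_d\times_{\cP_d}\cK_d$. Your rigidity step and the paper's separatedness step are really the same argument---both come down to $\underisom$ being finite over the base when the target is a separated Deligne--Mumford stack---but by working only at the generic point the paper sidesteps any discussion of Torelli for ADE-singular surfaces or of the uniqueness of the pointwise isomorphism at special points. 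Your route is correct but does a bit more work than necessary.
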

\begin{proof}
We want to show that $\varphi_d$ descends along $p_d$. The latter is \'etale, so all we have to do is to check that there is an isomorphism $\pr_1^*\varphi_d \simeq \pr_2^*\varphi_d$ on $\cK_d\times_{\cP_d}\cK_d$, where the $\pr_i$ denote the two projections.

As $\cM_d$ is separated, if $\pr_1^*\varphi_d$ is isomorphic to $\pr_2^*\varphi_d$ on the generic point, they are isomorphic everywhere. Therefore, in order to conclude is enough to observe that $\cK_d^\sm\to\cM_d^\sm$ and $\cK_d^\sm\to\cP_d^\sm$ are both isomorphisms, so we have that $\varphi_d$ descends to $\varphi_d\circ p_d^{-1}$ along $\cK_d^\sm \to \cP_d^\sm$.

This implies that there is an isomorphism $\pr_1^*\varphi_d\simeq\pr_2^*\varphi_d$ on the generic point.
\end{proof}

Denote by $\cM_{d}^{\ssi}$ the open subset of surfaces with a single $A_{1}$-singularity. Since the deformation theory of a K3 surface with rational double points is unobstructed, and the map from the deformation space of the surface to that of the singularities is smooth, we have that $\cM_{d}^{\sing}$ is a reduced  divisor, and $\cM_{d}^{\ssi}$ is a dense open substack contained in the smooth locus of $\cM_{d}^{\sing}$.

\begin{lemma}\label{lem:irreducible}
The divisor $\cM_{d}^{\sing}$ has two irreducible components if $\frac{d}{2}\equiv 1\pmod 4$, and is irreducible otherwise.
\end{lemma}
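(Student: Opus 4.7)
The plan is to reduce the count of irreducible components of $\cM_{d}^{\sing}$ to a count of $\Gamma_{d}$-orbits of primitive $(-2)$-vectors in $\Lambda_{d}$, and to resolve the orbit problem via Eichler's criterion together with a direct computation of the discriminant form of $\Lambda_{d}$.

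Since $\cM_{d}^{\sing}$ is a reduced divisor containing $\cM_{d}^{\ssi}$ as a dense smooth open substack, its irreducible components are in bijection with the connected components of $\cM_{d}^{\ssi}$. Using the factorization $\varphi_{d}=\psi_{d}\circ p_{d}$, the preimage $\cK_{d}^{\ssi}:=\varphi_{d}^{-1}(\cM_{d}^{\ssi})$ parametrizes quasi-polarized K3 surfaces $(\tilde X,\tilde L)$ carrying a unique smooth rational $(-2)$-curve $C$ orthogonal to $\tilde L$, and because the minimal resolution of an RDP-surface is canonical the contraction $\cK_{d}^{\ssi}\to\cM_{d}^{\ssi}$ is an isomorphism of stacks. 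Pulling back along the \'etale period map $p_{d}$ then presents this locus inside $D_{d}$ as $\bigcup_{\delta}D_{\delta}^{0}$, where $\delta$ ranges over primitive $(-2)$-vectors of $\Lambda_{d}$, $D_{\delta}=\{[\omega]:\langle\omega,\delta\rangle=0\}$, and $D_{\delta}^{0}\subset D_{\delta}$ excludes the deeper strata. The sets $D_{\delta}^{0}$ are connected and pairwise disjoint for $\delta\neq\pm\delta'$, so the connected components of $\cM_{d}^{\sing}$ are in bijection with the $\Gamma_{d}$-orbits on the unordered pairs $\{\pm\delta\}$ with $\delta\in\Lambda_{d}$ primitive and $\delta^{2}=-2$.

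To count these orbits I would invoke Eichler's criterion: since $\Lambda_{d}$ contains $U^{\oplus 2}$, the $\widetilde{O}(\Lambda_{d})$-orbit of a primitive vector $\delta$ is determined by $(\delta^{2},\,\delta^{*})$, where $\delta^{*}:=\delta/\mathrm{div}(\delta)\bmod\Lambda_{d}\in A_{\Lambda_{d}}\simeq\ZZ/d$, and an auxiliary argument using reflections in $\delta^{\perp}$ with spinor norms of both signs identifies these orbits with the $\Gamma_{d}=\widetilde{O}^{+}(\Lambda_{d})$-orbits. For $\delta^{2}=-2$ we have $\mathrm{div}(\delta)\in\{1,2\}$: divisor one always contributes $\delta^{*}=0$, while divisor two requires the unique nonzero $2$-torsion element $\eta\in A_{\Lambda_{d}}$ to satisfy $q(\eta)\equiv -1/2\pmod{2\ZZ}$. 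A short computation of the discriminant form using the orthogonal decomposition $\Lambda_{d}\simeq\ZZ v\oplus U^{\oplus 2}\oplus E_{8}(-1)^{\oplus 2}$ with $v^{2}=-d$ yields $q(\eta)=-d/4\bmod 2\ZZ$, so the condition becomes $d\equiv 2\pmod 8$, equivalently $d/2\equiv 1\pmod 4$. Since both candidates for $\delta^{*}$ are $2$-torsion, passing to unordered pairs $\{\pm\delta\}$ introduces no further identifications; the orbits number one when $d/2\not\equiv 1\pmod 4$ and two when $d/2\equiv 1\pmod 4$.

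The most delicate points are (i) verifying that the stacky contraction $\cK_{d}^{\ssi}\to\cM_{d}^{\ssi}$ is really an isomorphism, i.e.\ that automorphisms lift bijectively between resolutions and RDP-surfaces, and (ii) the passage from $\widetilde{O}(\Lambda_{d})$-orbits to $\Gamma_{d}$-orbits, which requires exhibiting an element of negative spinor norm inside the stabilizer of any primitive $(-2)$-vector; both facts are essentially standard but deserve a careful lattice-theoretic check.
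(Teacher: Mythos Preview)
Your argument is correct and follows the same overall strategy as the paper: reduce the count of components of $\cM_d^{\sing}$ to a count of $\Gamma_d$-orbits of primitive $(-2)$-classes in $\Lambda_d$, and then determine that this count is $1$ or $2$ according to whether $d/2\equiv 1\pmod 4$.

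The differences are in emphasis rather than substance. The paper transfers the problem to $\cP_d^{\sing}$ by observing that $\cP_d^{\sing}$ and $\cM_d^{\sing}$ share the same coarse moduli space, and then simply cites the orbit count (essentially your Eichler computation) from the literature. You instead pass through the identification $\cK_d^{\ssi}\simeq\cM_d^{\ssi}$ and the \'etale period map, and you carry out the Eichler-criterion calculation and the discriminant-form computation explicitly. Your route has the advantage of being self-contained; the paper's is shorter because it invokes the known classification of Heegner divisors. The two ``delicate points'' you flag are genuine but standard: the isomorphism $\cK_d^{\ssi}\simeq\cM_d^{\ssi}$ holds because along the equisingular $A_1$-locus the simultaneous resolution exists without base change (blow up the section of nodes), and the passage from $\widetilde O(\Lambda_d)$-orbits to $\Gamma_d$-orbits is handled by producing, inside the stabilizer of any primitive $(-2)$-vector $\delta$, a reflection in a $(-2)$-vector of $\delta^{\perp}$ with negative spinor norm, which exists since $\delta^{\perp}$ still contains a copy of $U$.
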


\begin{proof}
First we prove a similar statement for $\cP_d^{\sing}$: indeed, if we look at the action of $\Gamma_d$ on the set of generic points of the $\Gamma_d$-invariant divisor $D_d^{\sing} \subset D_d$, we see that when $\frac{d}{2}\equiv 1\pmod 4$ this set is made of two orbits, and is made of one orbit otherwise \cite[Proposition 2.11]{Deb}. This implies that the substack $\cP_d^{\sing}$ has either two or one irreducible components.

To conclude, observe now that $\cP_d^{\sing}$ and $\cM_d^{\sing}$ share the same coarse space, hence they must have the same number of irreducible components.
\end{proof}

\section{Computation of the Picard group of $\cM_d$} \label{sec:computation 1}
\subsection{}
In this section we compute the Picard group of $\cM_d$, the stack of polarized K3 surfaces of degree $d$ with at most rational double points.
Let $\rho(d)$ be the rank of the rational Picard group of $M_d$ \cite[Corollary 1.3]{BLMM}. Then the main result of this section is the following.
\begin{theorem}\label{thm:pic Md}
We have
    \[ \pic(\cM_d) \simeq \ZZ^{\rho(d)}. \]
\end{theorem}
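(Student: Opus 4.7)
The plan is to combine four ingredients that will all be in place by the point this theorem is stated: the injection $\psi_d^*\colon\pic(\cM_d)\hookrightarrow\pic(\cP_d)$ established in Section~2 from the factorization $\varphi_d=\psi_d\circ p_d$; the computation of the torsion of $\pic(\cP_d)$ as $\ZZ/2$ with an explicit generator (\Cref{prop:2 torsion Pic Pd}); the non-descent result (\Cref{lemma:does not descend}) saying that this generator is not in the image of $\psi_d^*$; and the Noether--Lefschetz conjecture of \cite{BLMM}, giving $\dim_\QQ\pic(M_d)\otimes\QQ=\rho(d)$.

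First I would nail down the rank. Since $\cM_d$ is a smooth Deligne--Mumford stack with generically trivial stabilizer and coarse moduli space $M_d$, the pullback along $\cM_d\to M_d$ becomes an isomorphism after tensoring with $\QQ$; combined with \cite{BLMM} this gives $\pic(\cM_d)\otimes\QQ\simeq\QQ^{\rho(d)}$.

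Next I would prove that $\pic(\cM_d)$ is finitely generated. Writing $\pi\colon\cM_d\to M_d$ for the coarse moduli morphism, the cokernel of $\pi^\ast\colon\pic(M_d)\to\pic(\cM_d)$ is controlled by the character groups of the (finitely many types of) stabilizers and is therefore finite; since $M_d$ is quasi-projective with a projective compactification (Baily--Borel), its Picard group is finitely generated, and therefore so is $\pic(\cM_d)$. Together with the rank computation this yields $\pic(\cM_d)\simeq\ZZ^{\rho(d)}\oplus T$ with $T$ a finite group.

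It remains to kill $T$. The injection $\psi_d^*$ maps $T$ into the torsion subgroup of $\pic(\cP_d)$, which is $\ZZ/2$, so $T$ is either trivial or $\ZZ/2$. If $T$ were $\ZZ/2$, its generator would pull back under $\psi_d^*$ to the unique non-zero $2$-torsion class of $\pic(\cP_d)$, contradicting \Cref{lemma:does not descend}. The main obstacle in this strategy is really the finite-generation step: because the free rank of $\pic(\cP_d)$ is only computed later (in Section~4), one cannot simply invoke it here, and the coarse-moduli argument must be carried out intrinsically. Once finite generation is known, the argument is a clean packaging of the rank bound with the bound on torsion coming from $\cP_d$ and the non-descent lemma.
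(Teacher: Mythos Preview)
Your torsion-killing step and rank computation are exactly how the paper proceeds: the injection $\psi_d^*$ together with $\pic(\cP_d)_{\mathrm{tors}}=\ZZ/2\cdot[\cL_d]$ and the non-descent lemma force $\pic(\cM_d)$ to be torsion free, and the rank is then read off from $\pic(M_d)\otimes\QQ$ via \cite{BLMM}. The only substantive difference is in how you establish finite generation, and there your argument has a genuine gap.

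The assertion ``since $M_d$ is quasi-projective with a projective compactification (Baily--Borel), its Picard group is finitely generated'' is false as a general principle: an abelian variety is projective with non-finitely-generated Picard group. What you actually need is that the irregularity of a smooth compactification of $M_d$ vanishes, equivalently $b_1=0$. This \emph{is} true here, but it requires input you do not invoke: since $D_d$ is simply connected, $\pi_1(M_d)$ is a quotient of $\Gamma_d$, whose abelianization is $\ZZ/2$ by \cite{GHS}; hence $b_1(M_d)=0$, and any smooth compactification inherits this. Without this step your finite-generation claim is unsupported. Your second claim, that the cokernel of $\pi^*\colon\pic(M_d)\to\pic(\cM_d)$ is \emph{finite}, is likewise not justified as stated: the standard descent result for tame Deligne--Mumford stacks only gives that this cokernel is $N$-torsion for some fixed $N$. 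One can still close the argument by combining $N$-torsion of the cokernel with the already-known bound $\pic(\cM_d)_{\mathrm{tors}}\hookrightarrow\ZZ/2$, but that is not the chain of implications you wrote.

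The paper takes a different route to finite generation: it excises $\cM_d^{\sing}$, presents $\cM_d^{\sm}\simeq[H_d/\PGL_n]$, and uses the Edidin--Graham approximation to replace the stack by a smooth quasi-projective scheme $X_d$ with the same Picard group; it then compactifies $X_d$ to a smooth projective $Y_d$ and kills $\pic^0(Y_d)$ by feeding the already-established torsion-freeness back in to bound torsion points. So the paper trades your analytic input about $\Gamma_d$ for an equivariant-geometry reduction. Either route can be completed, but yours needs the $b_1$-vanishing spelled out before the coarse-moduli comparison can do any work.
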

\subsection{}
By definition $D_d\to\cP_d$ is a $\Gamma_d$-torsor. We can use it to define an analytic line bundle on $\cP_d$ as follows: take $D_d\times \bA^1$ and let $\Gamma_d$ acts diagonally, where the action on $\bA^1$ is given by $A\cdot \lambda:=\det(A)\lambda$. The resulting quotient $\cL_d:=[\Gamma_d\diagdown D_d\times\bA^1]$ is a line bundle over $\cP_d$, which is not trivial because the determinant of an element in $\Gamma_d$ is not trivial in general. Observe also that $\cL_d^{\otimes 2}\simeq\cO_{\cP_d}$, because the determinant of an element in $\Gamma_d$ is a square root of the unity.
\begin{proposition}\label{prop:2 torsion Pic Pd}
The analytic line bundle $\cL_d$ is algebraic and we have
\[ \pic(\cP_d)[n] \simeq \left\{ 
\begin{matrix}
\ZZ/2\ZZ \cdot [\cL_d] && \text{ if }n\text{ is even ,}\\
0 && \text{ if }n\text{ is odd,}
\end{matrix}\right. \]
where $\pic(\cP_d)[n]$ denotes the $\ZZ$-submodule of elements annihilated by $n$.
\end{proposition}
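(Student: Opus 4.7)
First I would verify that $[\cL_d]$ has order at most $2$ in $\pic(\cP_d)$: since the character $\det^2\colon\Gamma_d\to\gm$ is identically $1$, the $\Gamma_d$-action on the second factor of $D_d\times\AA^1$ that defines $\cL_d^{\otimes 2}$ is trivial, so $\cL_d^{\otimes 2}\simeq\cO_{\cP_d}$. For nontriviality I need $\det\colon\Gamma_d\to\{\pm1\}$ to be surjective. I would verify this by exhibiting an explicit element of $\Gamma_d$ of determinant $-1$, for instance the reflection $s_v$ in a primitive $v\in\Lambda_d$ with $v^2=2$: one checks that $s_v(x)-x\in\Lambda_d$ for $x\in\Lambda_d^\vee$, so $s_v\in\widetilde O(\Lambda_d)$; a spinor-norm computation gives $s_v\in O^+(\Lambda_d)$; and $\det(s_v)=-1$.

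Next I would reduce the torsion of $\pic(\cP_d)$ to a character computation. Because $D_d$ is a contractible bounded symmetric domain, the Cartan--Leray spectral sequence for the étale covering $D_d\to\cP_d$ degenerates in low degree and gives
\[ H^1(\cP_d,\mu_n)\simeq H^1(\Gamma_d,\mu_n)=\hom(\Gamma_d,\mu_n). \]
Combining this with the Kummer exact sequence and using that $H^0(\cP_d,\cO^\times)=\CC^\times$ is $n$-divisible, I obtain $\pic(\cP_d)[n]\simeq\hom(\Gamma_d,\mu_n)$; under this identification the class $[\cL_d]$ corresponds to $\det$.

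The main obstacle is then to prove $\hom(\Gamma_d,\mu_n)\simeq\ZZ/\gcd(n,2)$, generated by $\det$; equivalently, that the torsion part of $\Gamma_d^{\mathrm{ab}}$ is $\ZZ/2\cdot[\det]$. Since $\Gamma_d$ is an irreducible arithmetic lattice in the higher-rank real Lie group $O(2,19)$, Margulis' normal subgroup theorem already forces $\Gamma_d^{\mathrm{ab}}$ to be finite, so the question is to pin down the exact order. I would combine Eichler's criterion, applicable because $\Lambda_d$ contains $U^{\oplus 2}$ as an orthogonal summand, with an analysis of conjugacy classes of reflections in $(-2)$-vectors and their images in $\Gamma_d^{\mathrm{ab}}$; appealing to specific results on the abelianization of arithmetic orthogonal groups of K3-type lattices is likely necessary at this step.

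Finally, for algebraicity of $\cL_d$: I would pull back along the representable étale morphism $p_d\colon\cK_d\to\cP_d$. On the algebraic $\Gamma_d$-torsor of markings $U_d\to\cK_d$, the pullback $p_d^*\cL_d$ is tautologically the line bundle associated to the algebraic one-dimensional $\Gamma_d$-representation $\det$, hence is algebraic on $\cK_d$. Since $\cP_d$ carries an algebraic DM-stack structure (from Baily--Borel applied to its quasi-projective coarse moduli, together with the étale presentation by $\cK_d$) and $p_d$ is representable étale, descent upgrades $\cL_d$ to an algebraic line bundle on $\cP_d$.
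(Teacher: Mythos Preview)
Your overall strategy coincides with the paper's: Kummer sequence, the identification of $H^1(\cP_d,\mu_n)$ with $\hom(\Gamma_d,\mu_n)$ via simple connectedness of $D_d$, and the input $\Gamma_d^{\mathrm{ab}}\simeq\ZZ/2$. For this last fact the paper simply cites \cite[Theorem~1.7]{GHS}, which is exactly the ``specific result on abelianizations of arithmetic orthogonal groups of K3-type lattices'' you anticipate needing; your Margulis-plus-Eichler sketch would at best reproduce parts of that argument, so citing it is the right move. (The paper phrases the identification through $\pi_1(\cP_d)\simeq\Gamma_d$ rather than Cartan--Leray, but this is cosmetic.)

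There is, however, a genuine gap in your algebraicity argument. The torsor $U_d\to\cK_d$ is built from the \emph{analytic} local system $R^2\pi^{an}_*\ZZ$; since $\Gamma_d$ is an infinite discrete group, $U_d$ (the moduli of marked quasi-polarized K3 surfaces) is only a non-Hausdorff complex manifold, not an algebraic space, and there is no algebraic $\Gamma_d$-torsor over $\cK_d$ to invoke. So neither the claim that $p_d^*\cL_d$ is ``tautologically'' algebraic nor the subsequent descent step is justified as written. The paper sidesteps this by applying Artin's comparison theorem for $\mu_2$-coefficients directly on $\cP_d$: \'etale and analytic $H^1(\cP_d,\mu_2)$ agree, and since the analytic class $[\cL_d]$ is the nontrivial element there, it is hit by the unique nontrivial \'etale class, hence comes from an algebraic line bundle. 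The same comparison is also what makes your Cartan--Leray computation relevant to the \emph{algebraic} Picard group, since $D_d\to\cP_d$ is an analytic cover and $D_d$ is not an algebraic variety.
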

\begin{proof}
Let $\mu_n$ denote the group of $n$-roots of unity. Then in the \'{e}tale topology we have a short exact sequence of sheaves
\[ 0 \longrightarrow \mu_n \longrightarrow \cO^* \overset{(-)^n}{\longrightarrow} \cO^* \longrightarrow 0 \]
where the morphism $\cO^* \to \cO^*$ sends $x$ to $x^n$. By looking at the induced long exact sequence in \'{e}tale cohomology, we have
\begin{equation}\label{eq:seq} 
H^0(\cP_d,\cO^*) \to H^0(\cP_d,\cO^*) \to H^1(\cP_d,\mu_n) \to H^1(\cP_d,\cO^*) \to H^1(\cP_d,\cO^*). 
\end{equation}
Observe that $H^0(\cP_d,\cO^*)=\CC^*$. Indeed, consider the coarse moduli space $\pi:\cP_d\to\cM_d\to M_d$, and its Baily-Borel compactification $\overline{M}_d$: the latter is a normal projective variety \cite[Theorem 10.11]{BB}, and $\overline{M}_d\smallsetminus M_d$ has codimension $>2$. This implies that $\cO(M_d)\simeq\cO(\overline{M}_d)=\CC$; as $M_d$ is a coarse space for $\cP_d$, we have $\pi_*\cO_{\cP_d}\simeq\cO_{M_d}$, from which our claim follows.

In particular, the first arrow in (\ref{eq:seq}) is surjective because $\CC$ is algebraically closed. As $H^1(\cP_d,\cO^*)\simeq \pic(\cP_d)$, we deduce that $\pic(\cP_d)[n]\simeq H^1(\cP_d,\mu_n)$. The latter group classifies cyclic covers of $\cP_d$, which are also classified by surjective homomorphisms $\pi_1(\cP_d)\to \ZZ/n\ZZ$.

As $D_d$ is simply connected, we deduce that $\pi_1(\cP_d)\simeq \Gamma_d$. Any morphism $\Gamma_d\to\ZZ/n\ZZ$ factors through the abelianization of $\Gamma_d$, which is isomorphic to $\ZZ/2\ZZ$ \cite[Theorem 1.7]{GHS}. From this we deduce that $\pic(\cP_d)[n]$ is trivial if $n$ is odd, and isomorphic to $\ZZ/2\ZZ$ if $n$ is even.

Let $\cP_d(\CC)$ denote the anaytic stack associated to $\cP_d$, and let $\cO^*_{an}$ denote the sheaf of invertible holomorphic functions on $\cP_d(\CC)$. Then we have we have a commutative diagram
\[
\begin{tikzcd}
    H^1(\cP_d,\mu_2) \ar[r] \ar[d] & H^1(\cP_d,\cO^*) \ar[d] \\
    H^1_{cl}(\cP_d(\mathbb{C}),\mu_2) \ar[r] & H^1_{cl}(\cP_d(\mathbb{C}),\cO_{an}^*)
\end{tikzcd}
\]
where $H_{cl}(-,-)$ on the bottom row are the sheaf cohomology groups with respect to the analytic topology. By Artin's comparison theorem \cite[Expos\'{e} XI, Theorem 4.4.(iii)]{SGA} the first vertical arrow is an isomorphism: this implies that $\cL_d$ is in the image of $H^1(\cP_d,\mu_2)\to  H^1_{cl}(\cP_d(\mathbb{C}),\cO_{an}^*)$, from which we deduce that it is in the image of the right vertical arrow, i.e. $\cL_d$ comes from an algebraic line bundle, which has to be unique because of our previous computation.
\end{proof}
\begin{lemma}\label{lemma:does not descend}
    The line bundle $\cL_d$ on $\cP_d$ does not descend to a line bundle on $\cM_d$.
\end{lemma}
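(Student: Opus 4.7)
The plan is to exhibit a geometric point $[\omega] \in \cP_d$ at which a reflection $s_\delta$ in the stabilizer acts nontrivially on the fiber of $\cL_d$, while $\psi_d$ sends $s_\delta$ to the identity of the automorphism group downstairs. Concretely, I choose a geometric point $(X,L)$ of the non-empty stratum $\cM_d^{\ssi}$, with minimal resolution $(\widetilde X,\widetilde L) \in \cK_d$, and set $[\omega] := p_d((\widetilde X,\widetilde L))$. After fixing a marking, the class of the exceptional $(-2)$-curve becomes a vector $\delta \in \Lambda_d$ with $\delta^2 = -2$ and $\delta \perp \omega, \bar\omega$, so the reflection $s_\delta$ lies in $\Gamma_d$ and stabilizes $[\omega]$. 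Since $\cL_d$ is built from the determinant character of $\Gamma_d$ acting on $\AA^1$, and $\det(s_\delta) = -1$, the element $s_\delta \in \aut_{\cP_d}([\omega])$ acts by $-1$ on the fiber $(\cL_d)_{[\omega]}$.

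Next, I analyze the homomorphism $\aut_{\cP_d}([\omega]) \to \aut_{\cM_d}((X,L))$ induced by $\psi_d$. Because minimal resolutions of rational double points are functorial on automorphisms, the factorization $\varphi_d = \psi_d \circ p_d$ together with the representability of $p_d$ exhibits $p_d$ as inducing an isomorphism $\aut_{\cK_d}((\widetilde X,\widetilde L)) \simeq \aut_{\cM_d}((X,L))$, so $p_d(\aut_{\cK_d})$ is a canonical section of $\psi_d$ on automorphisms. The strong global Torelli theorem identifies the full stabilizer as a semidirect product
\[ \aut_{\cP_d}([\omega]) \;=\; W \rtimes p_d\bigl(\aut_{\cK_d}((\widetilde X,\widetilde L))\bigr), \]
where $W$ is the Weyl group generated by reflections across the $(-2)$-classes in $\omega^\perp \cap \bar\omega^\perp \cap \Lambda_d$: elements of $W$ interchange the K\"ahler chambers across their walls and so are not induced by any biregular automorphism of $\widetilde X$. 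Therefore the kernel of $\psi_d$ on automorphisms is exactly $W$, which contains $s_\delta$.

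Finally, if $\cL_d \simeq \psi_d^*\cN$ for some $\cN \in \pic(\cM_d)$, the induced action of $s_\delta$ on $(\cL_d)_{[\omega]}$ would factor through the trivial character $\psi_d(s_\delta) = 1$ acting on $\cN_{\psi_d([\omega])}$, i.e.\ would be the identity; this contradicts the computation $s_\delta \cdot v = -v$ for $v \in (\cL_d)_{[\omega]}$. The main obstacle is the kernel computation in the second step: it rests on the semidirect product decomposition of the stabilizer supplied by the strong global Torelli theorem and on the fact that reflections across walls of the K\"ahler cone are never induced by holomorphic automorphisms.
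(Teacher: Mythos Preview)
Your approach is essentially the paper's: exhibit a reflection $s_\delta$ in the stabilizer of a point of $\cP_d^{\sing}$ that acts by $-1$ on the fiber of $\cL_d$ (because $\det s_\delta=-1$) while lying in the kernel of $\psi_d$ on automorphism groups. The paper carries this out more economically by choosing a \emph{generic} point of $\cP_d^{\sing}$, where the stabilizer is exactly $\mu_2=\{1,s_\delta\}$ and the automorphism group of the corresponding singular surface is trivial; then $\psi_d(s_\delta)=\id$ is automatic and no Torelli input is needed.

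There is, however, a genuine (if easily repaired) gap in your second step. You correctly note that $\varphi_d$ gives an isomorphism $\aut_{\cK_d}((\widetilde X,\widetilde L))\simeq\aut_{\cM_d}((X,L))$, so that $p_d(\aut_{\cK_d})$ is a section of $\psi_d$, and that strong Torelli yields $\aut_{\cP_d}([\omega])=W\rtimes p_d(\aut_{\cK_d})$. But these two facts do \emph{not} force $\ker\psi_d=W$: a surjection $W\rtimes A\twoheadrightarrow A$ admitting the section $a\mapsto(1,a)$ need not have kernel $W$ (take $W=A=\ZZ/2$, the direct product, and the map $(w,a)\mapsto w+a$; the section is $a\mapsto(0,a)$ but the kernel is the antidiagonal). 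Your stated justification---that reflections across walls are not induced by biregular automorphisms of $\widetilde X$---only gives $W\cap p_d(\aut_{\cK_d})=\{1\}$, which is the semidirect product statement again, not $W\subset\ker\psi_d$. The simplest fix is to take $(X,L)$ generic in $\cM_d^{\ssi}$: then $\aut_{\cM_d}((X,L))=\{1\}$ (the Picard lattice of $\widetilde X$ is $\ZZ\widetilde L\oplus\ZZ E$ and no nontrivial isometry fixing $\widetilde L$ preserves the effective cone), so $\psi_d$ kills the whole stabilizer and in particular $s_\delta$. With this adjustment your argument is complete, and the appeal to the full semidirect product decomposition becomes unnecessary.
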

\begin{proof}
The stabilizer of a generic point of $\cP_d^\sing$ is $\mu_2$, generated by the automorphism given by the reflection $\sigma$ with respect to the unique (up to scalar) element $\delta\in\Lambda_d$ with $\delta^2=(-2)$. This automorphism does not come from an automorphism of the associated singular K3 surface \cite[Remark 1.3]{Fri}. Therefore, if we show that $\sigma$ acts non-trivially on a generic fiber of $\cL_d|_{\cP_d^{\sing}}$, we can conclude that $\cL_d$ does not come from $\cM_d$.

Recall that $\cL_d$ is constructed using the determinant representation of $\Gamma_d$: then it follows that $\sigma_d$ acts via the determinant on a generic fiber of $\cL_d|_{\cP_d^{\sing}}$, and $\det(\sigma)=-1$; we deduce that the action of $\sigma_d$ is not trivial and thus the lemma is proved.
\end{proof}
\begin{lemma}\label{lem:injective}
We have 
\[\varphi_{d*}\cO_{\cK_{d}} = \cO_{\cM_{d}}, \quad p_{d*}\cO_{\cK_{d}} = \cO_{\cP_{d}}.\]
\end{lemma}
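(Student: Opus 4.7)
The plan is to establish both identities by a Stein factorization argument combined with Zariski's main theorem, exploiting the fact that both $\varphi_d$ and $p_d$ restrict to isomorphisms on the dense open smooth loci (as recorded in the proof of \Cref{lm:factorization}) and that their targets are smooth, hence normal, DM stacks.

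For the first identity, I would begin by verifying that $\varphi_d$ is proper. This can be done either by constructing the contraction relatively---$\varphi_d$ arises as $\cK_d \to \underline{\proj}_{\cM_d}\bigl(\bigoplus_{n\geq 0} M^{\otimes n}\bigr)$ for the universal polarization $M$ on $\cM_d$---or by appealing to the valuative criterion using the existence of simultaneous minimal resolutions of rational double points in families of K3 surfaces. Combined with the already-noted birationality (via $\cK_d^\sm \simeq \cM_d^\sm$), the Stein factorization produces a factorization
\[ \cK_d \longrightarrow \underline{\spec}_{\cM_d}\bigl(\varphi_{d*}\cO_{\cK_d}\bigr) \xrightarrow{\;\alpha\;} \cM_d, \]
where the first arrow has geometrically connected fibers and $\alpha$ is finite. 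Since $\varphi_d$ is birational, so is $\alpha$; since $\cM_d$ is normal and $\alpha$ is finite and birational onto it, Zariski's main theorem forces $\alpha$ to be an isomorphism, yielding $\varphi_{d*}\cO_{\cK_d} = \cO_{\cM_d}$.

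For the second identity, the strategy is parallel. The period morphism $p_d$ is étale, representable, and restricts to an isomorphism on the dense smooth loci $\cK_d^\sm \simeq \cP_d^\sm$. The coarse moduli spaces of $\cK_d$ and $\cP_d$ both agree with $M_d$ (by global Torelli for quasi-polarized K3 surfaces), and the induced morphism on coarse spaces is the identity. Running the same Stein factorization and invoking Zariski's main theorem with $\cP_d$ playing the role of the normal target produces $p_{d*}\cO_{\cK_d} = \cO_{\cP_d}$.

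The main obstacle is establishing the properness needed to invoke Stein factorization for $p_d$: being merely étale and representable, $p_d$ is not automatically proper, and one must carefully exploit the coincidence of coarse moduli spaces (together with the valuative criterion applied stack-theoretically) to justify the argument. A delicate point is reconciling this with the failure of $p_d$ to be an isomorphism on the singular locus (\Cref{rem:} \emph{via} Remark 1.8); however, since the two stacks differ only in stacky structure along the $\cP_d^\sing$-divisor and not in terms of sections of the structure sheaf, the pushforward computation does still go through. Once properness is in hand, the conclusion is a formal consequence of the birational Stein-factorization machinery.
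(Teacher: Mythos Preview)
Your treatment of the first identity is essentially the paper's: $\varphi_d$ is proper and birational with smooth (hence normal) target, so Stein factorization plus Zariski's main theorem gives $\varphi_{d*}\cO_{\cK_d}=\cO_{\cM_d}$.

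For the second identity there is a genuine gap. The morphism $p_d$ is \emph{not} proper, and it cannot be: it is representable and \'etale, so if it were proper it would be finite; being also birational onto the normal target $\cP_d$, Zariski's main theorem would force it to be an isomorphism, contradicting the fact (recorded in the Remark following the construction of $p_d$) that $p_d$ fails to surject on automorphism groups over $\cP_d^{\sing}$. Concretely, after pulling back along an \'etale atlas $U\to\cP_d$ the source $V=U\times_{\cP_d}\cK_d$ is a non-separated algebraic space over $U$ along the preimage of $\cP_d^{\sing}$ (this is the familiar non-Hausdorff phenomenon for marked K3 surfaces), so Stein factorization simply does not apply. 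Your proposed workaround via coincidence of coarse moduli spaces does not close the gap either: knowing that $\pi_{P*}(p_{d*}\cO_{\cK_d})\simeq\cO_{M_d}\simeq\pi_{P*}\cO_{\cP_d}$ does not let you conclude $p_{d*}\cO_{\cK_d}\simeq\cO_{\cP_d}$, since the coarse-moduli pushforward $\pi_{P*}$ kills nontrivial sheaves (e.g.\ any sheaf supported on the stacky locus with no invariants).

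The paper bypasses properness entirely with a direct normality argument. For an \'etale chart $U\to\cP_d$ with pullback $V\to U$, the restriction $V'\to U'$ over the smooth locus is an isomorphism, so $\cO(U')\simeq\cO(V')$. Since $U$ and $V$ are smooth, $\cO(U)$ (resp.\ $\cO(V)$) is exactly the subring of $\cO(U')$ (resp.\ $\cO(V')$) of functions without poles along $U^{\sing}$ (resp.\ $V^{\sing}$); and because $V\to U$ is \'etale and surjective, a function acquires a pole on $U^{\sing}$ if and only if its pullback does on $V^{\sing}$. Hence $\cO(U)\simeq\cO(V)$, giving $p_{d*}\cO_{\cK_d}=\cO_{\cP_d}$. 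Your final parenthetical (``the two stacks differ only in stacky structure\dots'') is the right intuition, but this Hartogs-type comparison is what actually makes it rigorous.
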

\begin{proof}
The first statement follows from the fact that $\psi_{d}$ is proper and birational, and $\cM_{d}$ is smooth.

For the second, the point is that $p_{d}$ is representable, \'{e}tale, surjective and birational. Let $U \arr \cP_{d}$ be an \'{e}tale map, where $U$ is a scheme. Set $V \eqdef U\times_{\cP_{d}}\cK_{d}$; then $V$ is an algebraic space, the map $V \arr U$ is \'{e}tale and a homeomorphism; we need to show that the induced homomorphism $\cO(U) \to \cO(V)$ is an isomorphism. 

If $U^{\sing}$ and $V^{\sing}$ are the inverse images of $\cP^{\sing}$ in $U$ and $V$ respectively, and set $U' \eqdef U \setminus U^{\sing}$ and $V' \eqdef V \setminus V^{\sing}$. Then the restriction $V' \arr U'$ of the projection $V \arr U$ is an isomorphism; hence $\cO(U') \arr \cO(V')$ is an isomorphism. But $\cO(U)$ is the subring of $\cO(U')$ of function without poles on $U^{\sing}$, and analogously $\cO(V$ is the subring of $\cO(V')$ of function without poles on $V^{\sing}$. Since $V \arr U$ is \'{e}tale and surjective, a function in $\cO(U')$ has poles on $U^{\sing}$ if and only if its pullback to $V'$ has poles along $V^\sing$; this completes the proof.
\end{proof}
\begin{proposition}\label{lm:inj}
    The pullback homomorphism $\psi_d^*:\pic(\cM_d)\to\pic(\cP_d)$ is injective. 
\end{proposition}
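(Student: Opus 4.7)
The plan is to use the projection formula together with \Cref{lem:injective}. Since $\varphi_d = \psi_d \circ p_d$ by \Cref{lm:factorization}, the pullback on Picard groups factors as $\varphi_d^* = p_d^* \circ \psi_d^*$; hence if I can show that $\varphi_d^*\colon \pic(\cM_d) \to \pic(\cK_d)$ is injective, the injectivity of $\psi_d^*$ will follow formally. So it suffices to prove injectivity of $\varphi_d^*$.

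To do so, I would apply the projection formula to $\varphi_d$: for any line bundle $L$ on $\cM_d$,
\[ \varphi_{d*}(\varphi_d^* L) \simeq L \otimes \varphi_{d*}\cO_{\cK_d}. \]
By the first half of \Cref{lem:injective}, $\varphi_{d*}\cO_{\cK_d} \simeq \cO_{\cM_d}$, so the right-hand side is just $L$. Consequently, if $\varphi_d^* L \simeq \cO_{\cK_d}$, then applying $\varphi_{d*}$ gives $L \simeq \varphi_{d*}\cO_{\cK_d} \simeq \cO_{\cM_d}$, which is the desired injectivity.

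There is no real obstacle in the argument itself: the only point that requires comment is the validity of the projection formula, which in the form needed here (morphism of algebraic stacks, tensored with a locally free sheaf of finite rank on the target) is standard and does not require properness. All the substantive geometric content has already been packaged into \Cref{lem:injective}, so once that lemma is at hand the proposition follows by a purely formal manipulation.
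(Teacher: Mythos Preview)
Your argument is correct and essentially the same as the paper's: both use the projection formula together with \Cref{lem:injective}. The only cosmetic difference is that the paper first combines the two identities of \Cref{lem:injective} via $\varphi_{d*}=\psi_{d*}\circ p_{d*}$ to deduce $\psi_{d*}\cO_{\cP_d}\simeq\cO_{\cM_d}$ and then applies the projection formula directly to $\psi_d$, whereas you apply it to $\varphi_d$ and use the factorization $\varphi_d^*=p_d^*\circ\psi_d^*$ afterwards.
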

\begin{proof}
    \Cref{lem:injective} implies that $\cO_{\cM_d}\to\psi_{d*}\cO_{\cP_d}$ is an isomorphism. Therefore, given a line bundle $\cL$ such that $\psi_d^*\cL\simeq\cO_{\cP_d}$, applying the projection formula we have
    \[ \cL\simeq\cL\otimes\cO_{\cM_d}\simeq\cL\otimes\psi_{d*}\cO_{\cP_d}\simeq \psi_{d*}(\psi_d^*\cL\otimes\cO_{\cP_d})\simeq\psi_{d*}\cO_{\cP_d}\simeq\cO_{\cM_d}. \]
\end{proof}
\begin{corollary}\label{cor:torsion free}
    The Picard group of $\cM_d$ is torsion free.
\end{corollary}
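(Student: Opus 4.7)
The plan is to chain together the three results that have just been established: the injectivity of $\psi_d^*\colon\pic(\cM_d)\to\pic(\cP_d)$ from \Cref{lm:inj}, the structure of the torsion in $\pic(\cP_d)$ from \Cref{prop:2 torsion Pic Pd}, and the non-descent statement \Cref{lemma:does not descend}.

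Concretely, suppose $\cN\in\pic(\cM_d)$ is a torsion element of order $n$. Then $\psi_d^*\cN$ lies in $\pic(\cP_d)[n]$. By \Cref{prop:2 torsion Pic Pd}, $\pic(\cP_d)[n]$ is either trivial (when $n$ is odd) or generated by $\cL_d$ (when $n$ is even). In the first case, injectivity from \Cref{lm:inj} forces $\cN\simeq\cO_{\cM_d}$. In the second case, either $\psi_d^*\cN\simeq\cO_{\cP_d}$, which again gives $\cN$ trivial by injectivity, or $\psi_d^*\cN\simeq\cL_d$, which would exhibit $\cL_d$ as the pullback of a line bundle from $\cM_d$, contradicting \Cref{lemma:does not descend}.

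There is no real obstacle here — this is a short formal consequence, with the only subtlety being the need to treat the two possibilities $\psi_d^*\cN\simeq\cO_{\cP_d}$ and $\psi_d^*\cN\simeq\cL_d$ separately when $n$ is even.
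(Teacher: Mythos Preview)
Your argument is correct and is essentially the paper's own proof: both combine the injectivity of $\psi_d^*$ (\Cref{lm:inj}), the description of the torsion in $\pic(\cP_d)$ (\Cref{prop:2 torsion Pic Pd}), and the fact that $\cL_d$ does not descend (\Cref{lemma:does not descend}). The paper just phrases this without the explicit odd/even case split, noting directly that the only non-trivial torsion class on $\cP_d$ is $\cL_d$.
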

\begin{proof}
    By \Cref{lm:inj} the pullback of any non-trivial torsion line bundle on $\cM_d$ is a non-trivial torsion line bundle on $\cP_d$. The only non-trivial torsion line bundle on $\cP_d$ is $\cL_d$, which by \Cref{lemma:does not descend} does not come from $\cM_d$, thus there are no non-trivial torsion line bundles on $\cM_d$.
\end{proof}

\begin{lemma}\label{lm:fg}
    The Picard group of $\cM_d$ is finitely generated.
\end{lemma}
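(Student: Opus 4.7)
The plan is to use \Cref{lm:inj} to embed $\pic(\cM_d)$ into $\pic(\cP_d)$, then show that $\pic(\cP_d)$ is finitely generated by comparing it with the Picard group of the coarse moduli space $M_d$, and finally control $\pic(M_d)$ via the Baily--Borel compactification.

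Since subgroups of finitely generated abelian groups are finitely generated, by \Cref{lm:inj} it suffices to show that $\pic(\cP_d)$ is finitely generated. By \Cref{prop:2 torsion Pic Pd} its torsion subgroup is $\ZZ/2$, so the task reduces to showing that the torsion-free quotient $\pic(\cP_d)/\mathrm{tors}$ is finitely generated. Recall that $\cP_d$ and $\cM_d$ share the same coarse moduli space $M_d$, so there is a coarse moduli map $\pi\colon\cP_d\to M_d$. The pullback $\pi^*\colon\pic(M_d)\to\pic(\cP_d)$ is injective via the same projection formula argument as in \Cref{lm:inj}, using $\pi_*\cO_{\cP_d}=\cO_{M_d}$. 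Since $\cP_d$ is a quasi-compact DM stack, there is a uniform bound $N$ on the orders of its stabilizer groups; the standard descent argument for line bundles along a coarse moduli map then yields $L^{N}\in\pi^*\pic(M_d)$ for every $L\in\pic(\cP_d)$. Hence multiplication by $N$ provides an injection of the torsion-free group $\pic(\cP_d)/\mathrm{tors}$ into $\pi^*\pic(M_d)$, and the problem is reduced to showing that $\pic(M_d)$ is finitely generated.

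For the latter, I would use the Baily--Borel compactification $\overline{M}_d$, a normal projective variety. Its boundary has codimension $\geq 2$ in $\overline{M}_d$ (in fact much larger, as the period domain is $19$-dimensional while the Baily--Borel boundary consists only of modular curves and points), so normality of $\overline{M}_d$ gives $\pic(M_d)\cong\pic(\overline{M}_d)$. This last group is finitely generated because $\overline{M}_d$ is normal projective with $H^1(\overline{M}_d,\cO_{\overline{M}_d})=0$, a standard vanishing for Baily--Borel compactifications of orthogonal modular varieties of type $(2,n)$ with $n\geq 3$.

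The main obstacle I expect is precisely the finite generation of $\pic(M_d)$: it rests on non-trivial input about the cohomology of structure sheaves of Baily--Borel compactifications of orthogonal Shimura varieties. Once that input is granted, the descent of $N$-th powers along the coarse moduli map and the chain of injections reducing $\pic(\cM_d)$ to $\pic(M_d)$ are essentially formal consequences of the earlier results in the section.
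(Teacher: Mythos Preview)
Your route is genuinely different from the paper's, and it contains a real gap at the passage from $M_d$ to $\overline{M}_d$. Normality of $\overline{M}_d$ together with $\codim(\overline{M}_d\setminus M_d)\geq 2$ gives an isomorphism of \emph{divisor class groups} $\operatorname{Cl}(\overline{M}_d)\simeq\operatorname{Cl}(M_d)$ and an injection $\pic(\overline{M}_d)\hookrightarrow\pic(M_d)$, but not the isomorphism of Picard groups you assert: a line bundle on $M_d$ extends only to a rank-one reflexive sheaf on $\overline{M}_d$, and Baily--Borel compactifications are in general far from locally factorial, so there is no reason this sheaf should be invertible. Hence even granting that $\pic(\overline{M}_d)$ is finitely generated, you have not bounded $\pic(M_d)$. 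The gap can be repaired --- for instance by noting that $\pic(M_d)\hookrightarrow\operatorname{Cl}(M_d)\simeq\operatorname{Cl}(\overline{M}_d)$ and that proper pushforward of cycles from any resolution $\widetilde{M}_d\to\overline{M}_d$ gives a surjection $\pic(\widetilde{M}_d)=\operatorname{Cl}(\widetilde{M}_d)\twoheadrightarrow\operatorname{Cl}(\overline{M}_d)$, reducing everything to $H^{1}(\widetilde{M}_d,\cO)=0$ --- but this last vanishing is exactly the ``non-trivial input'' you flag, and nothing in the paper supplies it.

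For comparison, the paper avoids $\overline{M}_d$ altogether. It excises $\cM_d^{\sing}$ to reduce to $\cM_d^{\sm}\simeq[H_d/\PGL_n]$, replaces this by a smooth quasi-projective \emph{scheme} $X_d$ with the same Picard group via the Edidin--Graham construction, and passes to a smooth projective compactification $Y_d$. The vanishing of $\pic^{0}(Y_d)$ is then obtained not from an external cohomology computation but by feeding back the torsion results already established in the section (\Cref{cor:torsion free}); once $\pic^{0}(Y_d)=0$, finite generation follows from the theorem of the base. Your approach, once patched, trades this internal bootstrap for outside input on the cohomology of arithmetic quotients, whereas the paper's argument stays self-contained at the price of invoking the equivariant machinery.
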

\begin{proof}
    Applying excision to the pair $\cM_d^{\sing}\subset\cM_d$, we see that there is an exact sequence 
    \[ \ZZ^{\oplus e} \longrightarrow \pic(\cM_d) \longrightarrow \pic(\cM_d^{\sm}) \longrightarrow 0 \]
    where $e$ is the number of irreducible components of $\cM_d^{\sing}$, which is either $1$ or $2$ by \Cref{lem:irreducible}. Therefore, is enough to prove that $\pic(\cM_d^{\sm})$ is finitely generated.
    
    We have an isomorphism $\cM_d^{\sm}\simeq[H_d/\PGL_n]$, where $H_d$ is a smooth quasi-projective variety \cite[Example 4.5]{Huy}. It always exists a $\PGL_n$-representation $V$ such that $\PGL_n$ acts freely on an open subset $U\subset V$ whose complement has codimension $\geq 2$ \cite[Lemma 9]{EG}. Consider then $X_d:=[H_d\times U/\PGL_d]$: we claim that (1) $X_d$ is a scheme and (2) $\pic(X_d)\simeq \pic(\cM_d^{\sm})$. 

    Claim (1) can be proved as follows: the quotient stack $[U/\PGL_n]$ is actually a scheme \cite[Lemma 9]{EG}, the group $\PGL_n$ is connected and $H_d$ is smooth, hence normal; from this it follows that $[H_d\times U/\PGL_n]$ is a (smooth) scheme \cite[Proposition 23.(2)]{EG}.

    Claim (2) follows from the fact that $\cM_d^{\sm}\simeq [H_d/\PGL_n]$ is a smooth quotient stack, hence we can identify its Picard group with its equivariant Picard group $\pic^{\PGL_n}(H_d)$ \cite[Proposition 18]{EG}. By homotopy invariance of equivariant Picard groups we have $\pic^{\PGL_n}(H_d)\simeq \pic^{\PGL_n}(H_d\times V)$ \cite[Lemma 2.(b)]{EG}. As the complement of $H_d\times U$ in $H_d\times V$ has codimension $\geq 2$, by excision we deduce $\pic^{\PGL_n}(H_d\times V)\simeq \pic^{\PGL_n}(H_d\times U)$, and the latter is isomorphic to $\pic([H_d\times U/\PGL_n])\simeq \pic(X_d)$.
    
    This implies that $\pic(\cM_d)\simeq\pic(X_d)$, so we reduce to proving the lemma in the case of a smooth quasi-projective variety.

    There exists a smooth compactification $Y_d\supset X_d$ (\cite{Nag} and \cite{Hir}), so if we prove that $\pic(Y_d)$ is finitely generated, we are done.

    For this, observe that $\underline{\pic}_{Y_d}$ is an abelian group scheme over $\CC$, and we claim that $\underline{\pic}_{Y_d}^0$ is an abelian variety with finitely many torsion points: if this is the case, then we can conclude that it is trivial.

    To see that $\underline{\pic}_{Y_d}^0$ has finitely many torsion points, consider the open embedding $X_d\hookrightarrow Y_d$ and the induced pullback homomorphism of groups $\pic^0(Y_d) \to \pic^0(X_d)$: if we prove that the latter has finitely many torsion points, we are done, because the complement of $X_d$ in $Y_d$ is made of finitely many divisors. But we just proved that $\pic(X_d)\simeq\pic(\cM_d^{\sm})$ which is torsion free, hence our claim holds true.

    The claim implies that $\pic(Y_d)$ injects into the Neron-Severi group, which is finitely generated. This concludes the proof.
\end{proof}

\begin{proof}[Proof of \Cref{thm:pic Md}]
    By \Cref{lm:fg} and \Cref{cor:torsion free} we know that $\pic(\cM_d)$ is a finitely generated, torsion free abelian group. Its rank is equal to the rank of $\pic(\cM_d)\otimes\bQ$, and the latter group is isomorphic to $\pic(M_d)\otimes\bQ$, whose rank is known \cite[Corollary 1.3]{BLMM}.
\end{proof}

We can also easily compute the Picard group of the moduli space $M_d$ of (primitively) polarized K3 surfaces of degree $d$ with at most rational double points, leveraging \Cref{cor:torsion free}. 
\begin{corollary}\label{cor:pic Md}We have
$$
\pic(M_d)\cong\mathbb Z^{\rho(d)}.
$$
\end{corollary}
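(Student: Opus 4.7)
The plan is to exploit the pullback $\pi^{*}\colon\pic(M_d)\to\pic(\cM_d)$ along the coarse moduli morphism $\pi\colon\cM_{d}\to M_{d}$, and show that it realizes $\pic(M_{d})$ as a finitely generated subgroup of $\pic(\cM_{d})\simeq\ZZ^{\rho(d)}$ of maximal rank. The main inputs are \Cref{thm:pic Md} and \Cref{cor:torsion free}.

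First I would establish injectivity of $\pi^{*}$. Since $\cM_{d}$ is a smooth Deligne--Mumford stack with coarse moduli space $M_{d}$, the space $M_{d}$ is normal and $\pi_{*}\cO_{\cM_{d}}\simeq\cO_{M_{d}}$ (the sections of $\cO_{\cM_{d}}$ over an \'etale open of $M_{d}$ are stabilizer-invariants, which are precisely the functions on $M_{d}$). Given a line bundle $\cL$ on $M_{d}$ with $\pi^{*}\cL\simeq\cO_{\cM_{d}}$, the projection formula yields
\[ \cL \simeq \cL \otimes \pi_{*}\cO_{\cM_{d}} \simeq \pi_{*}\pi^{*}\cL \simeq \pi_{*}\cO_{\cM_{d}} \simeq \cO_{M_{d}}, \]
so $\pi^{*}$ is injective.

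Combining this injection with \Cref{thm:pic Md} exhibits $\pic(M_{d})$ as a subgroup of $\ZZ^{\rho(d)}$, hence as a finitely generated, torsion-free abelian group of rank at most $\rho(d)$. On the other hand, $\pic(M_{d})\otimes\QQ$ has rank $\rho(d)$ by \cite{BLMM}, so the rank is exactly $\rho(d)$, and therefore $\pic(M_{d})\simeq\ZZ^{\rho(d)}$ as abstract groups.

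There is essentially no serious obstacle: the only point that requires mention is the identity $\pi_{*}\cO_{\cM_{d}}\simeq\cO_{M_{d}}$, which is standard for coarse moduli morphisms of Deligne--Mumford stacks with normal coarse space (and has already been used implicitly in the proof of \Cref{prop:2 torsion Pic Pd} to compute $H^{0}(\cP_{d},\cO^{*})$).
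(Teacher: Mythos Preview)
Your proposal is correct and follows essentially the same route as the paper: inject $\pic(M_{d})$ into $\pic(\cM_{d})\simeq\ZZ^{\rho(d)}$ via $\pi^{*}$, and then read off the rank from \cite{BLMM}. The only cosmetic difference is that the paper cites \cite[Proposition 6.1]{Ols-integral-models} for the injectivity of $\pi^{*}$, whereas you reprove it via $\pi_{*}\cO_{\cM_{d}}\simeq\cO_{M_{d}}$ and the projection formula (exactly the argument of \Cref{lm:inj}); note, incidentally, that $\pi_{*}\cO_{\cM_{d}}\simeq\cO_{M_{d}}$ holds by the very definition of a coarse moduli space and does not require normality of $M_{d}$.
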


\begin{proof}Since $\rho(d)$ is the rank of $\pic(M_d)\otimes\mathbb Q$, it is enough to show that the Picard group of $M_d$ is torsion free. By \cite[Proposition 6.1]{Ols-integral-models}, the coarse moduli space $\pi:\cM_d\to M_d$ induces an injection of Picard groups $\pi^*:\pic(M_d)\hookrightarrow\pic(\cM_d)$. By Corollary \ref{cor:torsion free}, the latter group is torsion-free and, so, $\pic(M_d)$ is also torsion-free. 
\end{proof}

\section{The Picard groups of $\cP_{d}$ and $\cK_{d}$}\label{sec:computation 2}
\subsection{}
In this last section, we leverage our knowledge of the Picard group of $\cM_d$ to compute the Picard groups of $\cK_d$ and $\cP_d$.

From the fact that $\pic(\cP_{d})$ is finitely generated, which is proved exactly like in the case of $\cM_{d}$ (see Lemma~\ref{lm:fg}), and from Proposition~\ref{prop:2 torsion Pic Pd} we immediately obtain the following.

\begin{theorem}\label{thm:pic Pd abstract}
As an abstract group, $\pic(\cP_{d}) \simeq \ZZ^{\rho(d)} \oplus \ZZ/2$.
\end{theorem}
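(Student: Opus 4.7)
The plan is to bootstrap the computation of $\pic(\cP_{d})$ from the two pieces of information already available: the torsion subgroup of $\pic(\cP_{d})$, computed in Proposition~\ref{prop:2 torsion Pic Pd}, and the rank of $\pic(\cM_{d})$, computed in Theorem~\ref{thm:pic Md}. The entire argument boils down to identifying $\pic(\cP_{d})$ as a finitely generated abelian group whose rank is $\rho(d)$ and whose torsion part is $\ZZ/2$.

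First, I would verify that $\pic(\cP_{d})$ is finitely generated. The proof of Lemma~\ref{lm:fg} goes through essentially verbatim: one replaces $\cM_{d}^{\sm}$ by $\cP_{d}^{\sm}$ (applying excision along the divisor $\cP_{d}^{\sing}$, whose number of components is controlled exactly as in Lemma~\ref{lem:irreducible} since $\cP_{d}^{\sing}$ and $\cM_{d}^{\sing}$ share a coarse space), presents $\cP_{d}^{\sm}$ as a quotient stack via the period map, and runs the same equivariant Picard group / smooth compactification argument. The one subtle step is ruling out torsion in $\underline{\pic}^{0}$ of the compactification, but that is handled in the same way, appealing now to the already known fact (Proposition~\ref{prop:2 torsion Pic Pd}) that the only torsion in $\pic(\cP_{d}^{\sm})$ can be the $\ZZ/2$ class of $\cL_{d}$.

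Next, I would pin down the rank. By Proposition~\ref{lm:inj} the pullback $\psi_{d}^{*}\colon\pic(\cM_{d}) \to \pic(\cP_{d})$ is injective, so $\rk \pic(\cP_{d}) \geq \rho(d)$. For the reverse inequality, recall that $\cP_{d}$ and $\cM_{d}$ have the same coarse moduli space $M_{d}$; as $\cP_{d}$ is a smooth Deligne--Mumford stack with finite inertia, standard comparison (see \cite{Ols-integral-models}) gives $\pic(\cP_{d})\otimes\QQ \simeq \pic(M_{d})\otimes\QQ$, which has rank $\rho(d)$. Thus $\rk \pic(\cP_{d}) = \rho(d)$.

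Finally, Proposition~\ref{prop:2 torsion Pic Pd} identifies the torsion subgroup of $\pic(\cP_{d})$ with $\ZZ/2$, generated by $\cL_{d}$. Combining, $\pic(\cP_{d})$ is a finitely generated abelian group of rank $\rho(d)$ with torsion $\ZZ/2$, hence abstractly isomorphic to $\ZZ^{\rho(d)} \oplus \ZZ/2$. I do not anticipate any serious obstacle; the only thing demanding a small sanity check is the transcription of the finite-generation argument from $\cM_{d}$ to $\cP_{d}$, but every ingredient used there (existence of a $\PGL_{n}$-presentation of the smooth locus via the period domain, finiteness of $\cP_{d}^{\sing}$, and control over torsion) is either already established or a routine adaptation.
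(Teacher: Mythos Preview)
Your proposal is correct and follows the same route as the paper: finite generation (transported from Lemma~\ref{lm:fg}), torsion from Proposition~\ref{prop:2 torsion Pic Pd}, and rank via the common coarse space $M_{d}$. One simplification you may have overlooked: since $\cP_{d}^{\sm}\simeq\cM_{d}^{\sm}$ (see the proof of Lemma~\ref{lm:factorization}), the excision step reduces finite generation of $\pic(\cP_{d})$ \emph{directly} to that of $\pic(\cM_{d}^{\sm})$, already established inside Lemma~\ref{lm:fg}; no re-running of the equivariant/compactification argument is needed, and your appeal to Proposition~\ref{prop:2 torsion Pic Pd} for the torsion of $\pic(\cP_{d}^{\sm})$ (which concerns $\cP_{d}$, not $\cP_{d}^{\sm}$) becomes unnecessary.
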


The exact relation between $\pic(\cM_{d})$, $\pic(\cP_{d})$ and $\pic(\cK_{d})$ depends on whether $\frac{d}{2} \equiv 1 \pmod 4$ or not.

\begin{theorem}\label{thm:pic Pd}
Suppose that $\frac{d}{2} \not\equiv 1 \pmod 4$. Then the pullback $p_{d}^{*}\colon \pic(\cP_{d}) \arr \pic(\cK_{d})$ is an isomorphism and we have a split exact sequence
   \[
   \begin{tikzcd}
   0 \rar &{\pic\cM_{d}} \rar["\varphi_{d}^{*}"] & {\pic\cP_{d}} \rar & \ZZ/2\rar & 0\,.
   \end{tikzcd}
   \]
where the splitting is given by $1\mapsto [\cL_d]$.
\end{theorem}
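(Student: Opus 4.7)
My plan is to establish the first claim via the projection formula combined with a localization-sequence argument, and the second claim by analyzing a character $\chi\colon\pic(\cP_d)\to\ZZ/2$ that records the $\mu_2$-weight of a line bundle on its fiber at a generic point of $\cP_d^\sing$.

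For the first claim, injectivity of $p_d^*$ follows from the projection formula combined with \Cref{lem:injective} (the equality $p_{d*}\cO_{\cK_d}=\cO_{\cP_d}$), by the same argument as in \Cref{lm:inj}. For surjectivity, let $\cL\in\pic(\cK_d)$. Since $p_d$ restricts to an isomorphism on the smooth loci, $\cL|_{\cK_d^\sm}$ equals the restriction of some $p_d^*\cL'$, so by the localization sequence we have $\cL - p_d^*\cL' = n\,[\cK_d^\sing]$ for some $n\in\ZZ$. Under our hypothesis on $d$, \Cref{lem:irreducible} makes $\cP_d^\sing$, and therefore also $\cK_d^\sing$ (which is in coarse bijection with it), irreducible. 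Since $p_d$ is \'etale, $p_d^*[\cP_d^\sing]=[\cK_d^\sing]$, so $\cL = p_d^*(\cL' + n[\cP_d^\sing])$ lies in the image of $p_d^*$.

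For the split exact sequence, I consider the group homomorphism $\chi\colon\pic(\cP_d)\to\ZZ/2$ sending $\cL$ to the character of $\mu_2$ acting on the fiber of $\cL$ at a generic point of $\cP_d^\sing$, where $\mu_2$ is generated by the reflection $\sigma$ from \Cref{lemma:does not descend}. By the reasoning of \Cref{lemma:does not descend}, $\chi(\cL_d)=1$ and $\chi\circ\psi_d^*=0$; in particular $\chi$ is surjective and the image of $\psi_d^*$ is contained in $\ker\chi$. To prove the reverse inclusion, I use two ingredients: (i) the normal bundle of $\cP_d^\sing$ in $\cP_d$ is the sign representation of $\mu_2$ (since $\sigma$ is a reflection), so $\chi([\cP_d^\sing])=1$; (ii) \'etale-locally around a generic point of $\cP_d^\sing$, the morphism $\psi_d$ is modeled by the coarse-moduli map $[\AA^2/\mu_2]\to \AA^2/\mu_2$ with $\mu_2$ acting on one coordinate $y$, yielding the ramification identity $\psi_d^*[\cM_d^\sing]=2[\cP_d^\sing]$. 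Given $\cL\in\ker\chi$, localization on both $\cM_d$ and $\cP_d$ combined with $\cP_d^\sm\simeq\cM_d^\sm$ lets me write $\cL=\psi_d^*\cM + n[\cP_d^\sing]$ for some $\cM\in\pic(\cM_d)$ and $n\in\ZZ$; applying $\chi$ and using (i) forces $n$ to be even, say $n=2m$, and (ii) then realizes $n[\cP_d^\sing]$ as $\psi_d^*(m[\cM_d^\sing])$, so $\cL$ lies in the image of $\psi_d^*$. Finally, the assignment $1\mapsto[\cL_d]$ is a well-defined splitting because $\cL_d^{\otimes 2}\simeq\cO_{\cP_d}$.

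The main technical obstacle is justifying the \'etale-local description of $\psi_d$ in (ii) as the relative coarse-moduli map for the generic $\mu_2$-inertia; this is the input which simultaneously yields the ramification identity and the sign-character computation.
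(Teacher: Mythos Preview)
Your proof is correct and follows essentially the same strategy as the paper: both arguments rest on the projection formula (via \Cref{lem:injective}) for injectivity, the localization sequences together with irreducibility of the singular locus (\Cref{lem:irreducible}) and the ramification identity $\psi_d^*[\cM_d^{\sing}]=2[\cP_d^{\sing}]$ (\Cref{lem:ramification}) for the cokernel computation, and the fact that $\cL_d$ does not descend (\Cref{lemma:does not descend}) to produce the splitting. The only difference is presentational: you build the map to $\ZZ/2$ explicitly as the $\mu_2$-weight character $\chi$ at a generic point of $\cP_d^{\sing}$ and then verify $\ker\chi=\operatorname{im}\psi_d^*$, whereas the paper organizes the same inputs into a commutative diagram of localization sequences and reads off that the cokernel of $\psi_d^*$ injects into $\ZZ/2$; your ``technical obstacle'' (ii) is exactly what the paper isolates as \Cref{lem:ramification}, proved there by a direct comparison of automorphism groups rather than an \'etale-local model.
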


\begin{theorem}\label{thm:pic Pd new}
    Suppose $\frac{d}{2} \equiv 1 \pmod 4$. Then $p_d^*\colon\pic(\cP_d)\to\pic(\cK_d)$ is an isomorphism; furthermore we have a non-split short exact sequence
    \[
   \begin{tikzcd}
   0 \rar &{\pic\cM_{d} \times \ZZ/2} \rar["\varphi_{d}^{*}\times \id"] & {\pic\cP_{d}} \rar & \ZZ/2\rar & 0\,.
   \end{tikzcd}
   \]
    and neither class $[\cP_{d,1}^{\sing}]$ or $[\cP_{d,2}^{\sing}]$ sent to zero by the last map. 
\end{theorem}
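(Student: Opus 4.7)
The plan is to compute $\pic(\cP_d)$ by relating it to $\pic(\cM_d)$ and $\pic(\cK_d)$ via the localization sequences attached to the codimension one singular loci, the key input being an \'etale-local analysis of $p_d$ and $\psi_d$ near a generic point of each $\cP_{d,i}^{\sing}$.

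\emph{Local models and pullback of divisors.}
Near a generic point of $\cP_{d,i}^{\sing}$, whose stabilizer is $\mu_2 = \langle\sigma_{\delta_i}\rangle$, the reflection $\sigma_{\delta_i}$ acts by $-1$ on the $\delta_i$-direction and trivially on $\delta_i^\perp$, giving the \'etale-local model $\cP_d \simeq [\spec\CC[t, \vec u]/\mu_2]$ with $t\mapsto -t$. Since $\sigma_{\delta_i}$ does not lift to an automorphism of the associated quasi-polarized K3 (as in the proof of \Cref{lemma:does not descend}), the stack $\cK_d$ has trivial generic stabilizer on the singular locus, hence local model $\spec\CC[t,\vec u]$ with $p_d$ the quotient map; analogously, $\cM_d$ has local model $\spec\CC[s,\vec u]$ with $s = t^2$. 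This yields $\cK_{d,i}^{\sing}\simeq \cP_{d,i}^{\sing}$, $p_d^*[\cP_{d,i}^{\sing}] = [\cK_{d,i}^{\sing}]$, and $\psi_d^*[\cM_{d,i}^{\sing}] = 2[\cP_{d,i}^{\sing}]$.

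\emph{Isomorphism $p_d^*$ and structure of $\coker\psi_d^*$.}
The snake lemma applied to the commutative diagram of localization sequences
\[
\begin{tikzcd}
\ZZ^2 \ar[r] \ar[d, "\mathrm{id}"] & \pic(\cP_d) \ar[r] \ar[d, "p_d^*"] & \pic(\cP_d^{\sm}) \ar[r] \ar[d, "\simeq"] & 0 \\
\ZZ^2 \ar[r] & \pic(\cK_d) \ar[r] & \pic(\cK_d^{\sm}) \ar[r] & 0
\end{tikzcd}
\]
forces $p_d^*$ to be an isomorphism. The analogous diagram comparing $\cM_d$ and $\cP_d$, with left vertical map $2\cdot\mathrm{id}$, yields $\coker\psi_d^* \simeq (\ZZ/2)^2$, generated by the classes $[\cP_{d,1}^{\sing}]$ and $[\cP_{d,2}^{\sing}]$.

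\emph{Position of $[\cL_d]$ in $\coker\psi_d^*$.}
For each $i$, the generic $\mu_2$-stabilizer on $\cP_{d,i}^{\sing}$ acts on the fiber of any line bundle by a character $\chi_i \in \ZZ/2$. The assembled homomorphism $\chi = (\chi_1,\chi_2)\colon \pic(\cP_d) \to (\ZZ/2)^2$ kills the pullbacks from $\cM_d$ (whose generic singular stabilizer is trivial), so $\chi$ factors through $\coker\psi_d^*$. A computation in the local models gives $\chi_i([\cO(\cP_{d,i}^{\sing})]) = 1$ and $\chi_i([\cO(\cP_{d,j}^{\sing})]) = 0$ for $j\neq i$ (the generator $t$ of the ideal has $\mu_2$-weight $-1$, while $\cO(\cP_{d,j}^{\sing})$ is trivial at a generic point of $\cP_{d,i}^{\sing}$ since the two distinct irreducible divisors meet only in codimension two), while $\chi_i([\cL_d]) = 1$ because $\cL_d$ is built from the determinant character of $\Gamma_d$ and $\det(\sigma_{\delta_i}) = -1$. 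Hence $\chi$ is surjective and, by cardinality, an isomorphism $\coker\psi_d^* \simeq (\ZZ/2)^2$; in particular,
\[ [\cL_d] \equiv [\cP_{d,1}^{\sing}] + [\cP_{d,2}^{\sing}] \pmod{\psi_d^*\pic(\cM_d)}. \]

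\emph{Assembly and non-splitness.}
The map $\pic(\cM_d)\times\ZZ/2\to\pic(\cP_d)$, $(M,\epsilon)\mapsto\psi_d^*M+\epsilon[\cL_d]$, is injective by \Cref{lm:inj} together with the torsion-freeness of $\pic(\cM_d)$, with cokernel $(\ZZ/2)^2/\langle[\cL_d]\rangle \simeq \ZZ/2$. Both $[\cP_{d,1}^{\sing}]$ and $[\cP_{d,2}^{\sing}]$ project to the nonzero generator, since neither coincides with $[\cP_{d,1}^{\sing}] + [\cP_{d,2}^{\sing}] \equiv [\cL_d]$ in $\coker\psi_d^*$. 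Non-splitness is then forced by \Cref{thm:pic Pd abstract}: a split extension would give $\pic(\cP_d)$ a $2$-torsion subgroup of order $4$, contradicting $\pic(\cP_d) \simeq \ZZ^{\rho(d)}\oplus\ZZ/2$. The main obstacle in executing this plan is the rigorous verification of the \'etale-local pictures of $p_d$ and $\psi_d$, together with the identification of the components of $\cK_d^{\sing}$ with those of $\cP_d^{\sing}$; once these local statements are established, the rest is formal snake-lemma bookkeeping combined with the character computation above.
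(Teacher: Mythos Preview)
Your argument is essentially the paper's: both use the two localization diagrams built from \Cref{lem:ramification}, the identification $\cP_d^{\sm}\simeq\cM_d^{\sm}\simeq\cK_d^{\sm}$, and the stabilizer-action computations on the generic points of $\cP_{d,i}^{\sing}$. Your character map $\chi=(\chi_1,\chi_2)\colon\pic(\cP_d)\to(\ZZ/2)^2$ is a clean repackaging of what the paper does piecemeal: the paper's \Cref{lem:image} is exactly your computation $\chi_i([\cO(\cP_{d,i}^{\sing})])=1$, and the paper's case analysis ruling out $[\cP_{d,1}^{\sing}]=\psi_d^*\cL'+[\cL_d]$ by restricting to $\cP_d\setminus\cP_{d,1}^{\sing}$ is your computation $\chi_2([\cL_d])=1\neq 0=\chi_2([\cP_{d,1}^{\sing}])$. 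Your packaging has the advantage of showing directly that $\coker\psi_d^*\simeq(\ZZ/2)^2$ and pinning down $[\cL_d]$ as the diagonal element, whereas the paper extracts only what is needed for the statement.

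Two small points to tighten. First, the snake lemma applied to your $p_d$-diagram gives only surjectivity of $p_d^*$, since the rows are merely right-exact; injectivity needs $p_{d*}\cO_{\cK_d}=\cO_{\cP_d}$ and the projection formula (\Cref{lem:injective}), which the paper invokes and you should too. Second, your sentence ``yields $\coker\psi_d^*\simeq(\ZZ/2)^2$'' right after the $\psi_d$-diagram is premature: the diagram chase gives only a surjection $(\ZZ/2)^2\twoheadrightarrow\coker\psi_d^*$, and it is your subsequent computation that $\chi$ is surjective (sending the two divisor classes to the standard basis) which upgrades this to an isomorphism. With these two clarifications the proof goes through.
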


\subsection{}
Call $\cP_{d}^{\sing}$ and $\cK_{d}^{\sing}$ the inverse images of $\cM_{d}^{\sing}$ in $\cP_{d}$ and $\cK_{d}$ respectively, with their reduced scheme structure. Since $\cK_{d} \arr \cP_{d}$ is \'{e}tale, $\cK_{d}^{\sing}$ is the scheme-theoretic inverse image of $\cP_{d}^{\sing}$. Moreover, when $\frac{d}{2}\equiv 1\pmod 4$, let $\cM_{d,i}^{\sing}$ (resp. $\cP_{d,i}^{\sing}$, $\cK_{d,i}^{\sing}$) for $=1,2$ be the two irreducible components of $\cM_d^{\sing}$ (resp. $\cP_{d}^{\sing}$, $\cK_{d}^{\sing}$) given by \Cref{lem:irreducible}. 

\begin{lemma}\label{lem:ramification}
We have
   \[\psi_{d}^{*}[\cM_{d}^{\sing}] = 2[\cP_{d}^{\sing}], \quad  p_{d}^{*}[\cP_{d}^{\sing}] = [\cK_{d}^{\sing}].  \]
If $\frac{d}{2}\equiv 1\pmod 4$ we have
   \[\psi_{d}^{*}[\cM_{d,i}^{\sing}] = 2[\cP_{d,i}^{\sing}], \quad  p_{d}^{*}[\cP_{d,i}^{\sing}] = [\cK_{d,i}^{\sing}].   \]
\end{lemma}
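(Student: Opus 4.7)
The identities involving $p_{d}$ are essentially formal: since $p_{d}\colon \cK_{d} \arr \cP_{d}$ is \'{e}tale, the scheme-theoretic pullback of a reduced Cartier divisor remains reduced. As $\cK_{d}^{\sing}$ (resp.\ $\cK_{d,i}^{\sing}$) is defined as the reduced preimage of $\cP_{d}^{\sing}$ (resp.\ $\cP_{d,i}^{\sing}$), we immediately obtain $p_{d}^{*}[\cP_{d}^{\sing}] = [\cK_{d}^{\sing}]$ and the componentwise version.

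For $\psi_{d}^{*}[\cM_{d}^{\sing}] = 2[\cP_{d}^{\sing}]$, the plan is to compute the local structure of $\psi_{d}$ \'{e}tale-locally at a geometric generic point $\overline{y}$ of $\cP_{d}^{\sing}$. Since $\psi_{d}$ restricts to an isomorphism $\cP_{d}^{\sm} \arr \cM_{d}^{\sm}$ and $\cM_{d}^{\sing}$ is a reduced Cartier divisor, we know a priori that $\psi_{d}^{*}[\cM_{d}^{\sing}] = m[\cP_{d}^{\sing}]$ for some positive integer $m$, so it suffices to show $m = 2$.

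By the discussion preceding \Cref{lemma:does not descend}, the stabilizer at $\overline{y}$ is $\mu_{2}$, generated by the reflection $\sigma$ in the unique $(-2)$-class $\delta \in \omega^{\perp} \cap \overline{\omega}^{\perp}$. Using the tangent space identification recalled in the \'{e}taleness argument for $p_{d}$, $\delta$ lies in the tangent space $\{\omega, \overline{\omega}\}^{\perp} \subset \Lambda_{d}\otimes\CC$ of $\cP_{d}$ at $\overline{y}$, and $\sigma$ acts on this tangent space as the reflection fixing $\delta^{\perp}$ and sending $\delta$ to $-\delta$. Since $\cP_{d}$ is a smooth Deligne-Mumford stack with inertia $\mu_{2}$ at $\overline{y}$, it is \'{e}tale-locally isomorphic to $[V/\mu_{2}]$ for a smooth affine scheme $V = \spec \CC[t, x_{1}, \ldots, x_{n-1}]$ on which $\mu_{2}$ acts by $t \mapsto -t$, fixing the $x_{i}$; the divisor $\cP_{d}^{\sing}$ corresponds to the fixed locus $\{t = 0\}$. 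Since $\sigma$ does not lift to an automorphism of the associated singular K3 surface (\Cref{lemma:does not descend}), the composition $V \arr [V/\mu_{2}] \arr \cM_{d}$ factors through the coarse quotient $V/\mu_{2} = \spec \CC[s, x_{1}, \ldots, x_{n-1}]$, with $s = t^{2}$, via an \'{e}tale map. Since $\cM_{d}^{\sing}$ is reduced and its pullback to $V/\mu_{2}$ is $\{s = 0\}$, pulling back further to $V$ yields $\{t^{2} = 0\} = 2\{t = 0\}$, so $m = 2$.

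The refined statement when $\frac{d}{2} \equiv 1 \pmod 4$ follows from the same local analysis, carried out at a generic point of each component $\cP_{d,i}^{\sing}$ separately, together with the observation that $\psi_{d}$ restricts to a dominant map $\cP_{d,i}^{\sing} \arr \cM_{d,i}^{\sing}$ after a compatible labeling of components. The main obstacle will be justifying that the induced map from the coarse quotient $V/\mu_{2}$ to $\cM_{d}$ is \'{e}tale at the image of $\overline{y}$, which amounts to checking that the generic automorphism group along each component of $\cM_{d}^{\sing}$ is trivial; alternatively, one can observe that any residual stabilizer upstairs on $\cM_{d}$ would not affect the ramification computation, which depends only on the $\mu_{2}$ killed by $\psi_{d}$.
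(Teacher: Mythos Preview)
Your proof is correct and takes essentially the same route as the paper: both identify, at a generic point of $\cP_{d}^{\sing}$, the reflection $\sigma$ in the $(-2)$-class $\delta$ as generating a $\mu_{2}$ in the kernel of the map on automorphism groups induced by $\psi_{d}$, and deduce ramification order~$2$. Your explicit local model $[V/\mu_{2}] \to V/\mu_{2}$ with $s = t^{2}$ is just an unpacking of the paper's one-line conclusion ``kernel $\mu_{2}$, hence ramified of order~$2$'', and the subtlety you flag at the end is exactly what the paper addresses by asserting that $\aut\eta \to \aut\xi$ is surjective.
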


\begin{proof}
The second equations of both statements follow from the fact that $p_{d}$ is \'{e}tale.

For the first, notice that a generic closed point $\xi\colon \spec \CC \arr \cM_{d}^{\sing}$ corresponds to a polarized K3 surface $(X, L)$ with only one singular point of type $A_{1}$. If $\widetilde{X} \arr X$ is the crepant resolution of $X$, we can fix an isomorphism $\alpha:H^{2}(\widetilde{X}, \ZZ) \simeq \Lambda$ sending $\rmc_{1}(L)$ in $\ell$; by taking $\alpha\otimes\id_\CC(H^{2,0}(X))$, we obtain a point $\gamma\in D_d$ mapping to $\xi$, hence a lifting $\eta\colon \spec \CC \arr \cP_{d}$ of $\xi$. 

The automorphism group of $\eta$ is the stabilizer of $\gamma$ in $\Lambda_{d}$, which contains the reflexion along the element $\delta\in \Lambda$ corresponding to the class in $H^{2}(X, \ZZ)$ of the $(-2)$-curve contracted by $\widetilde{X} \arr X$. Then the group homomorphism $\aut \eta \arr \aut\xi$ is surjective, and its kernel is cyclic of order $2$, generated by the reflexion along $\delta$. 

It follows that $\psi_{d}$ is ramified of order~$2$ along the irreducible components $\cP_{d}^{\sing}$, which implies the first equations of both statements.
\end{proof}

\begin{lemma}\label{lem:image}
For $=1,2$, the divisor $[\cP_{d,i}^{\sing}]$ does not belong to the image of $\psi_d^*:\pic(\cM_d)\to\pic(\cP_d)$.
\end{lemma}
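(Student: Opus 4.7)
My plan is to argue by contradiction by comparing the characters of the generic inertia group of $\cP_{d,i}^{\sing}$ acting on the two sides of the hypothetical equality. Suppose $[\cP_{d,i}^{\sing}] = \psi_d^{*}\cN$ for some $\cN \in \pic(\cM_d)$; I aim to exhibit a contradiction already on the residual gerbe of a very general point of $\cP_{d,i}^{\sing}$.

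First I would pick a very general point $\eta$ of $\cP_{d,i}^{\sing}$ with image $\xi \eqdef \psi_d(\eta) \in \cM_{d,i}^{\sing}$. By \Cref{lem:ramification} the homomorphism $\aut\eta \arr \aut\xi$ is surjective with kernel $\mu_2 = \langle \sigma \rangle$, where $\sigma$ is the reflection along the $(-2)$-class $\delta$ contracted by $\widetilde X \arr X$. I would then check that $\aut\xi$ is trivial at a very general $\xi$: for a very general K3 with a single $A_1$-singularity, the Picard lattice of the crepant resolution $\widetilde X$ is generated by $L$ and $\delta$, and the only Hodge isometry of $\widetilde X$ fixing both of these classes is the identity; hence $\aut\eta = \mu_2$.

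With this in hand, the character comparison is direct. Since $\psi_d^{*}\cN$ is pulled back from $\cM_d$, the action of $\aut\eta$ on its fiber at $\eta$ factors through $\aut\eta \arr \aut\xi = 1$, so the generic $\mu_2$-character is trivial. On the other hand, the fiber of $\cO(\cP_{d,i}^{\sing})$ at $\eta$ is the normal line $N_{\cP_{d,i}^{\sing}/\cP_d,\eta}$, which étale-locally on $D_d$ is the direction transverse to the reflection hyperplane $\delta^{\perp}$; the element $\sigma$ visibly acts on this direction by $-1$. The two $\mu_2$-characters therefore disagree, contradicting the hypothesis.

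The step I expect to require the most care is the verification that $\aut\xi = 1$ at a very general $\xi$, a small but nontrivial lattice-theoretic computation about Hodge isometries of K3 surfaces whose transcendental lattice is as large as possible; the rest of the argument then reduces to matching the two explicit characters computed above.
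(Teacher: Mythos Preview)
Your argument is correct and follows essentially the same route as the paper: both compare the action of the reflection $\sigma$ (the generator of the kernel of $\aut\eta\to\aut\xi$) on the fiber of a pulled-back bundle versus on the fiber of $\cO(\cP_{d,i}^{\sing})$, using that $\sigma$ negates a local equation $\langle\delta,\,\cdot\,\rangle$ of the reflection hyperplane.

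The one point worth flagging is that your detour through $\aut\xi=1$ is unnecessary. You already know from \Cref{lem:ramification} that $\sigma$ lies in the \emph{kernel} of $\aut\eta\to\aut\xi$; that alone forces $\sigma$ to act trivially on the fiber of any $\psi_d^{*}\cN$, regardless of whether $\aut\xi$ is trivial. This is exactly what the paper uses, and it spares you the lattice computation you identify as the most delicate step. (Your computation is correct for $d>2$, but you can simply drop it.)
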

\begin{proof}
Let us assume that $i=1$, the other case can be proved in the same way. We argue by contradiction, thus suppose that $[\cP_{d,1}^{\sing}]=\psi_d^*[\cL']$. This implies that the ideal sheaf $\cO(-\cP_{d,1}^{\sing})$ comes from $\cM_d$.

Let $x\in \cP_{d,1}^{\sing}$ be a generic point, and let $\sigma_d\in \aut(x)$ be the involution that does not come from $\aut(\psi_d(x))$.
Then $\cO(-\cP_{d,1}^{\sing})(x)$ is generated by a local equation of $\cP_{d,1}^{\sing}$ on which $\sigma_d$ should act trivially. We now show that this is not the case.

Let $y\in D_d$ be a point mapping to $x$ in $\cP_d$: then there exists a $(-2)$-class $\delta \in \Lambda_d$ such that 
\[\ell(z)=\langle \delta, z \rangle = 0\]
is a local equation for the preimage of $\cP_{d,1}^{\sing}$ around $y$. By construction, the involution $\sigma_d$ corresponds to the reflection with respect to the hyperplane $\delta^{\bot}$, hence it maps $\delta\mapsto -\delta$. This implies that $\sigma \cdot \ell(z) = -\ell(z)$, hence the action on the generator of $\cO(-\cP_{d,1}^{\sing})$ is not trivial. We have reached a contradiction.
\end{proof}

\begin{proof}[Proof of Theorem~\ref{thm:pic Pd}]
The fact that $\psi_{d}^{*}\colon \pic(\cM_{d}) \arr \pic(\cP_{d})$ and $p_{d}^{*}\colon \pic(\cP_{d}) \arr \pic(\cK_d) $ are injective follows from Lemma~\ref{lem:injective} and the projection formula.

By \Cref{lem:irreducible} we have that $\cM_d^{\sing}$, $\cP_d^{\sing}$, $\cK_d^{\rm sing}$ are irreducible. We have three homomorphisms $\ZZ \arr \pic(\cM_{d})$, $\ZZ \arr \pic(\cP_{d})$ and $\ZZ \arr \pic(\cK_{d})$ sending $1 \in \ZZ$ in $\cM_{d}^{\sing}$, $\cP_{d}^{\sing}$ and $\cK_{d}^{\sing}$ respectively. From Lemma~\ref{lem:ramification} we get two commutative diagrams with exact rows
   \[
   \begin{tikzcd}
   \ZZ \rar\dar["\cdot 2"] & {\pic(\cM_{d})} \dar["\psi_{d}^{*}"]\rar & {\pic(\cM_{d}^{\sm}})\rar\dar[equal] & 0\\
   \ZZ \rar & {\pic(\cP_{d})} \rar & {\pic(\cM_{d}^{\sm})}\rar & 0
   \end{tikzcd}
   \]
and
   \[
   \begin{tikzcd}
   \ZZ \rar\dar[equal] & {\pic(\cP_{d})} \dar["p_{d}^{*}"]\rar & {\pic(\cP_{d}^{\sm})}\rar\dar[equal] & 0\\
   \ZZ \rar & {\pic(\cK_{d})} \rar & {\pic(\cK_{d}^{\sm})}\rar & 0
   \end{tikzcd}
   \]
From the second we get that $\psi_{d}^{*}\colon \pic(\cP_{d}) \arr \pic(\cK_{d})$ is surjective, hence, that is is an isomorphism, as claimed.

From the first we obtain that the cokernel of the injective map $\psi_{d}^{*}\pic(\cM_{d}) \arr \pic(\cP_{d})$ is contained in the cokernel of $\ZZ\xarr{\cdot 2} \ZZ$, which is $\ZZ/2\ZZ$. Since we have a $2$-torsion element of $\pic(\cP_{d})$, the class of $\cL_{d}$, that does not come from $\pic(\cM_{d})$ (Lemma~\ref{lemma:does not descend}), this proves the result.
\end{proof}

\begin{proof}[Proof of \Cref{thm:pic Pd new}]
The fact that $\psi_{d}^{*}\colon \pic(\cM_{d}) \arr \pic(\cP_{d})$ and $p_{d}^{*}\colon \pic(\cP_{d}) \arr \pic(\cK_d) $ are injective follows from Lemma~\ref{lem:injective} and the projection formula, as in the previous proof.

We have $\cM_d^{\sing}=\cM_{d,1}^{\sing}\cup \cM_{d,2}^{\rm sing}$, where the $\cM_{d,i}^{\rm sing}$ are integral divisors. Similarly, we have $\cP_d^{\sing}=\cP_{d,1}^{\sing}\cup \cP_{d,2}^{\rm sing}$ and $\cK_d^{\sing}=\cK_{d,1}^{\sing}\cup \cK_{d,2}^{\rm sing}$.

Define a homomorphism $\ZZ\cdot e_1 \oplus \ZZ\cdot e_2 \to \pic(\cM_d)$ given by $e_i \mapsto [\cM_{d,i}^{\sing}]$. We also have homomorphisms $\ZZ\cdot e_1 \oplus \ZZ\cdot e_2\to \pic(\cP_d)$ and $\ZZ\cdot e_1 \oplus \ZZ\cdot e_2\to \pic(\cK_d)$ defined in a similar way. From the second part of \Cref{lem:ramification} we have commutative diagrams
 \[
   \begin{tikzcd}
   \ZZ\cdot e_1\oplus \ZZ\cdot e_2 \rar\dar["\cdot 2"] & {\pic(\cM_{d})} \dar["\psi_{d}^{*}"]\rar & {\pic(\cM_{d}^{\sm}})\rar\dar[equal] & 0\\
   \ZZ\cdot e_1\oplus \ZZ\cdot e_2 \rar & {\pic(\cP_{d})} \rar & {\pic(\cM_{d}^{\sm})}\rar & 0
   \end{tikzcd}
   \]
and
   \[
   \begin{tikzcd}
   \ZZ\cdot e_1\oplus \ZZ\cdot e_2 \rar\dar[equal] & {\pic(\cP_{d})} \dar["p_{d}^{*}"]\rar & {\pic(\cP_{d}^{\sm})}\rar\dar[equal] & 0\\
   \ZZ\cdot e_1\oplus \ZZ\cdot e_2 \rar & {\pic(\cK_{d})} \rar & {\pic(\cK_{d}^{\sm})}\rar & 0.
   \end{tikzcd}
   \]
From the second we get that $\psi_{d}^{*}\colon \pic(\cP_{d}) \arr \pic(\cK_{d})$ is surjective, hence, that is is an isomorphism, as claimed.

From the first, we get that there is an exact sequence
\[ 0 \longrightarrow \pic(\cM_d) \longrightarrow \pic(\cP_d) \overset{f}{\longrightarrow} \ZZ/2\times\ZZ/2. \]
Observe that $f(\cL_d)\neq 0$ because $\cL_d$ does not come from $\pic(\cM_d)$. From this we deduce that we have an exact sequence
\[ 0 \longrightarrow \pic(\cM_d)\times\ZZ/2\cdot[\cL_d] \longrightarrow \pic(\cP_d) \overset{g}{\longrightarrow} \ZZ/2. \]
To prove that the last arrow is surjective, we show that $g([\cP_{d,1}^{\sing}])\neq 0$ (the same argument applies also to $[\cP_{d,2}^{\sing}]$).

We argue by contradiction: if $g([\cP_{d,1}^{\sing}])= 0$, then
\[[\cP_{d,1}^{\sing}] = \psi_d^*[\cL'] + a[\cL_d], \quad a\in\{0,1\}. \]
Suppose first $a=1$. If we restrict everything to the open substack $\cP_d \smallsetminus \cP_{d,1}^{\sing}$, we deduce that $\cL_d|_{\cP_d \smallsetminus \cP_{d,1}^{\sing}}$ comes from $\cM_d \smallsetminus\cM_{d,1}^{\sing}$; this is a contradiction, because by construction the automorphism group of a point $x\in\cP_{d,2}^{\sing}$ acts non trivially on the fiber $\cL_d(x)$.

This shows that we must have $a=0$, but also this cannot be the case because of \Cref{lem:image}. Therefore $g([\cP_{d,1}^{\sing}])\neq 0$ as claimed. The fact that $g$ is not split follows from the fact that $\pic(\cP_d)[2]\simeq \ZZ/2$ by \Cref{prop:2 torsion Pic Pd}.
\end{proof}

\printbibliography
\end{document}